\documentclass[11pt]{article}
\usepackage{amsmath, amsthm, amstext}
\usepackage{amssymb, array, amsfonts}
\usepackage[utf8]{inputenc}
\usepackage[english]{babel}
\usepackage{enumerate}
\usepackage{graphicx}
\usepackage{mathrsfs}

\usepackage{cite}

\inputencoding{utf8}

\numberwithin{equation}{section}

\newtheorem{thm}{Theorem}
\newtheorem{lemma}{Lemma}

\title{Asymptotic properties of solutions to the characteristic problem for the ultrahyperbolic equation}
\author{Maxim N. Demchenko\footnote{St.~Petersburg Department of
V.\,A.~Steklov Institute of Mathematics of
the Russian Academy of Sciences, 
27 Fontanka, St.~Petersburg, Russia. E-mail: demchenko@pdmi.ras.ru}}
\date{}

\begin{document}
\maketitle

\begin{abstract}
  The paper concerns the problem for the ultrahyperbolic equation in the Euclidean space with data on a characteristic hyperplane.
  Smoothness and asymptotics of the solution along characteristic lines transversal to the initial hyperplane are investigated.
\medskip

\noindent \textbf{Keywords:} 
ultrahyperbolic equation, asymptotic behavior of a solution at the infinity, scattering problem.

\end{abstract}

\section{Introduction}
We consider the problem for the ultrahyperbolic equation
\begin{equation}
  (\partial^2_{ts} + \partial_{x_1}^2 +\ldots + \partial_{x_d}^2 - \partial_{y_1}^2 - \ldots - \partial_{y_n}^2) v = 0,
  \label{eqn-psi}
\end{equation}
where the solution $v$ is a function of variables
\begin{equation*}
  (t, s, x_1, \ldots, x_d, y_1, \ldots, y_n) \in 
  {\mathbb R}^{N+2}, \quad N = d + n,
  \quad
  d,n \geqslant 1,
\end{equation*}
subject to the following condition
\begin{equation}
  v|_{t=0} = v_0.
  \label{cauchy}
\end{equation}
The latter is imposed on the hyperplane $\{t=0\}$, which,
as will be seen later, is a {\em characteristic} one for equation~(\ref{eqn-psi}).
We will consider solutions $v$ from the class of $L_2({\mathbb R}^{N+1})$-valued continuous functions of variable $t\in{\mathbb R}$.
The meaning of condition~(\ref{cauchy}) for such functions and data $v_0\in L_2({\mathbb R}^{N+1})$ is obvious.
We will demand that equation~(\ref{eqn-psi}) be satisfied in the whole space ${\mathbb R}^{N+2}$ in the sense of distributions.
This also makes sense, as solutions from the class in question can be considered as elements of $L_{2,\rm loc}({\mathbb R}^{N+2})$.

Problem~(\ref{eqn-psi}), (\ref{cauchy}) was first studied in~\cite{Blag}.
The main result obtained there was a representation of the solution as a convolution of the data $v_0$ with a certain distribution
in ${\mathbb R}^{N+1}$. 
The latter was given in terms of analytic continuation of a family of distributions with respect to a complex parameter.
The present paper concerns the issues of smoothness of the solution and its asymptotic behavior for large $t$.
We will use the representation of the solution obtained in~\cite{char-setup}
(where the well-posedness of the problem was also established).

Our interest in the characteristic problem for equation~(\ref{eqn-psi}) is motivated by well-posedness of such a problem
in standard (non-analytic) classes of functions,
in contrast to the Cauchy problem with data on a noncharacteristic hypersurface.
As is well known, in the case of a hyperbolic equation, the Cauchy problem is well-posed exactly when
the initial data are specified on a {\em spacelike} hypersurface.
However, the latter has no counterparts in the case of ultrahyperbolic equation,
which, in essence, is the reason of ill-posedness of the corresponding Cauchy problem. 

The following conservation law for problem~(\ref{eqn-psi}), (\ref{cauchy}) was established in~\cite{Blag}: 
\begin{equation*}
  \|v(t, \cdot)\|_{L_2} = \|v_0\|_{L_2}, \quad t\in{\mathbb R}.
\end{equation*}
This feature, together with the well-posedness, 
demonstrates the similarity of the problem with classical evolution problems
for hyperbolic equations.
In particular, it seems to be achievable to develop an analogue of stationary scattering theory for such a problem
considering $t$ as a time parameter.
One of the corner stones of such a theory would be the study of the asymptotic behavior of solutions for large $t$,
which is the subject of the present paper.

The majority of results in this field either concern corresponding fundamental solutions
(we mention the pioneering work~\cite{deRham} and more recent ones~\cite{HolstMelin, Ortner-Wagner})
or generalize Asgeirsson's theorem~\cite{Lyakhov-Polovinkin-Shishkina, Lyakhov-Polovinkin-Shishkina-DAN}.
In the well-posed problems for ultrahyperbolic equations known so far, the data are specified on a characteristic hypersurface or at the infinity
(the scattering data).
Besides the results of~\cite{Blag} mentioned above,
we also mention the paper~\cite{Blag-char},
where the well-posedness of the problem in Euclidean space with data on the characteristic cone was established.
Problems with data at the infinity were considered in~\cite{DMN23}.
In the case of hyperbolic equations, such kind of problems were studied first in~\cite{Blag-scatter, Lax-Phillips}
and later in~\cite{Moses, Kis, Plachenov}.
As was mentioned above, the Cauchy problem for the ultrahyperbolic equation is ill-posed in general.
However, there are positive results in the case, when the solution is assumed to be symmetric with respect to a part of variables~\cite{Lyakhov-Bulatov}
or the initial data belong to a special class of functions~\cite{Weinstein}.
Among other nonclassical problems close to the one considered in the present paper,
the most complete investigation was carried out for the problems connected with {\em pseudohyperbolic} equations~\cite{Demid}.

\section{Smoothness and asymptotics of the solution to problem~(\ref{eqn-psi}), (\ref{cauchy})}
The result of the paper concerns the smoothness of the solution to problem~(\ref{eqn-psi}), (\ref{cauchy}) for $t\ne 0$
and its behavior for large $t$.
As in the case of hyperbolic equations~\cite{Blag-scatter, Lax-Phillips},
asymptotics of solutions along characteristic lines transversal to the initial hyperplane will be established.

To formulate the result, 
it is convenient to pass from variables $t, s$ to
\begin{equation*}
  x_0 = t+s, \quad y_0 = t-s.
\end{equation*}
Put
\begin{gather*}
  \overline x = (x_1, \ldots, x_d) \in {\mathbb R}^d,
  \quad
  \overline y = (y_1, \ldots, y_n) \in {\mathbb R}^n,
  \\
  x = (x_0, \overline x) \in {\mathbb R}^{d+1},
  \quad
  y = (y_0, \overline y) \in {\mathbb R}^{n+1}.
\end{gather*}
Since
\begin{equation*}
  \partial_t = \partial_{x_0} + \partial_{y_0}, \quad \partial_s = \partial_{x_0} - \partial_{y_0},
  \quad
  \partial_t \partial_s = \partial_{x_0}^2 - \partial_{y_0}^2,
\end{equation*}
the function
\begin{equation}
  u(x,y) = v\left(t, s, \overline x, \overline y\right) 
  \label{uv}
\end{equation}
satisfies the following equation
\begin{gather}
  (\Delta_x - \Delta_y) u = 0, \label{eq-u}
  \\ \Delta_x = \partial_{x_0}^2 + \ldots + \partial_{x_d}^2, \quad \Delta_y = \partial_{y_0}^2 + \ldots + \partial_{y_n}^2. \notag
\end{gather}
In coordinates $(x,y)$, the condition $\{t=0\}$ determines the set 
\begin{equation}
  \{(x,y)\, |\, x_0 + y_0 = 0\},
  \label{char-plane}
\end{equation}
which is a characteristic hyperplane for equation~(\ref{eq-u}).
Therefore, the set $\{t=0\}$ is a characteristic hyperplane for equation~(\ref{eqn-psi}).

The result will be given in terms of the function $u$ related to the solution $v$ by equality~(\ref{uv}).
We will need a description of lines that are characteristic with respect to equation~(\ref{eq-u}) and transversal to hyperplane~(\ref{char-plane}).
Such lines consist of points $(x,y)$ of the form
\begin{gather}
  x = X + \tau \theta, \quad
  y = Y + \tau \omega, \label{xy-tending}
  \\ (X, Y, \theta, \omega) \in {\mathbb R}^{d+1}\times{\mathbb R}^{n+1}\times S^d\times S^n \notag
\end{gather}
($S^m$ is the unit sphere in ${\mathbb R}^{m+1}$),
where $\tau$ is a real parameter.
The transversality condition above reads
\begin{equation}
  \theta_0 + \omega_0 \ne 0.
  \label{nontang}
\end{equation}
In view of the last constraint, we may assume that the ``initial point'' $(X,Y)$ in~(\ref{xy-tending}) (which corresponds to $\tau = 0$)
lies on the hyperplane~(\ref{char-plane}),
which is equivalent to condition
\begin{equation}
  X_0 + Y_0 = 0.
  \label{char-init}
\end{equation}
Under this assumption, a point $(x,y)$ corresponding to $\tau \ne 0$ lies outside of this hyperplane.
Observe that condition~(\ref{nontang}) ensures the point $(x,y)$ with large $\tau$
corresponds to the point $(t,s,\overline x,\overline y)$ with large $t$.

Further we will need some estimates locally uniform with respect to $(x,y)$ outside of hyperplane~(\ref{char-plane}).
Any such point together with its small neighborhood can be represented in the form~(\ref{xy-tending})
with fixed $\theta,\omega$ satisfying~(\ref{nontang}) and parameters $\tau$, $X,Y$ that vary within small neighborhoods
and satisfy conditions~(\ref{char-init}) and $\tau \ne 0$.
Thus, in locally uniform estimates, constants may depend arbitrarily on $\theta, \omega, R$
providing that
\begin{equation}
  |X| + |Y| \leqslant R.
  \label{char-init-R}
\end{equation}

We will study the function $u(x,y)$ and its partial derivatives with multiindex $\beta$:
\begin{equation*}
  \partial^\beta u = \partial_{x_0}^{\beta_0} \ldots \partial_{x_d}^{\beta_d}\, \partial_{y_0}^{\beta_{d+1}} \ldots \partial_{y_n}^{\beta_{N+1}} u.
\end{equation*}
The formulation of the result involves the polynomial
\begin{equation*}
  P^\beta(\theta,\omega) = (i\theta_0)^{\beta_0} \ldots (i\theta_d)^{\beta_d} (-i\omega_0)^{\beta_{d+1}} \ldots (-i\omega_n)^{\beta_{N+1}},
\end{equation*}
and the Fourier transform of the function $v_0(s, \overline x, \overline y)$
defined as follows
\begin{equation*}
  \widetilde v_0(\lambda, \overline\xi, \overline\eta) = 2 \int_{{\mathbb R}^{N+1}} e^{-i(s\lambda + \overline x\,\overline\xi - \overline y\,\overline\eta)}
  v_0(s, \overline x, \overline y)\, ds d\overline x d\overline y,
  \quad
  (\lambda, \overline\xi, \overline\eta) \in {\mathbb R}\times {\mathbb R}^d \times {\mathbb R}^n.
\end{equation*}
Here and further expression $w z$ for $w,z\in {\mathbb R}^m$ denotes the inner product,
$w^2 := w w$.

\begin{thm}\label{thm}
  Let $v_0(s, \overline x, \overline y)$ belong to Schwartz class $\mathcal{S}({\mathbb R}^{N+1})$.
  Then the function $u(x,y)$ related to the solution of problem~(\ref{eqn-psi}), (\ref{cauchy}) via~(\ref{uv})
  is $C^\infty$-smooth outside of hyperplane~(\ref{char-plane}).
  Under assumptions~(\ref{nontang}), (\ref{char-init}), (\ref{char-init-R}), for any point $(x,y)$ of the form~(\ref{xy-tending})
  with $\tau\geqslant\tau_0>0$, we have
\begin{gather}
  \left|\partial^\beta u(x, y) - \tau^{-N/2} F^\beta(\theta, \omega, p) \right|
  \leqslant C_{R, \tau_0, \theta, \omega, \beta, v_0} \tau^{-(N+1)/2},
  \label{thm-asymp}
  \\
  F^\beta(\theta, \omega, p) :=
  P^\beta(\theta,\omega) |\theta_0 + \omega_0|
  \int_{\mathbb R} e^{i r p} G^\beta_{d,n}(r) \widetilde v_0(r(\theta_0 + \omega_0), r\overline\theta, r\overline\omega)  dr,
  \notag\\
  G^\beta_{d,n}(r) := \frac{e^{i ({\rm sgn}\, r) \pi (n-d)/4} ({\rm sgn}\, r)^{|\beta|} |r|^{N/2+|\beta|}}{2 (2\pi)^{N/2+1}}, \quad p := X\theta - Y\omega.
  \notag
\end{gather}
\end{thm}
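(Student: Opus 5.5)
The plan is to pass to a Fourier representation of the solution, rewrite it as a superposition of plane waves on the characteristic cone, and then apply stationary phase on $S^d\times S^n$.

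\emph{Step 1: explicit formula.} Take the Fourier transform of (\ref{eqn-psi}) in $(s,\overline x,\overline y)$, with the sign conventions used for $\widetilde v_0$. This gives $i\lambda\,\partial_t\widehat v=(\overline\xi^2-\overline\eta^2)\widehat v$, so
\begin{equation*}
  \widehat v(t)=e^{-it(\overline\xi^2-\overline\eta^2)/\lambda}\,\widetilde v_0 .
\end{equation*}
I would take this representation from \cite{char-setup}. Next rewrite the phase $s\lambda+\overline x\,\overline\xi-\overline y\,\overline\eta-t(\overline\xi^2-\overline\eta^2)/\lambda$ in the variables $(x,y)$. Set $\xi_0=(\lambda^2-\overline\xi^2+\overline\eta^2)/(2\lambda)$ and $\eta_0=\xi_0-\lambda$. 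Then $\xi_0^2+\overline\xi^2=\eta_0^2+\overline\eta^2$, and the phase equals $x\xi-y\eta$ with $\xi=(\xi_0,\overline\xi)$, $\eta=(\eta_0,\overline\eta)$. Now parametrize this cone by $\xi=r\theta'$, $\eta=r\omega'$ with $r\in\mathbb R$ and $(\theta',\omega')\in S^d\times S^n$; then $\lambda=r(\theta'_0+\omega'_0)$. Computing the Jacobian of $(\lambda,\overline\xi,\overline\eta)\mapsto(r,\theta',\omega')$ should give
\begin{equation*}
  u(x,y)=c\int_{S^d\times S^n}\int_{\mathbb R} e^{ir(x\theta'-y\omega')}\,|r|^{N}\,|\theta'_0+\omega'_0|\,
  \widetilde v_0\bigl(r(\theta'_0+\omega'_0),r\overline\theta',r\overline\omega'\bigr)\,dr\,d\theta'\,d\omega',
\end{equation*}
valid in the sense of $L_2$ and absolutely convergent for $v_0\in\mathcal S$. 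Formally differentiating under the integral sign produces the factor $P^\beta(\theta',\omega')r^{|\beta|}$.

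\emph{Step 2: smoothness off (\ref{char-plane}).} Differentiation under the integral sign is legitimate wherever $\widetilde v_0$ decays rapidly in $r$, that is, away from the set $\theta'_0+\omega'_0=0$, where its first argument vanishes identically. Introduce a cutoff $\chi(\theta',\omega')$ supported near $\{\theta'_0+\omega'_0=0\}$.
\begin{itemize}
\item On the support of $1-\chi$, $\widetilde v_0$ is Schwartz in $r$, so that part is $C^\infty$ everywhere.
\item Near the set, I would integrate by parts along the sphere directions. By the formula above, the spherical gradient of the phase $r(x\theta'-y\omega')$ restricted to the set is essentially $t$ times a nonvanishing vector, because $x_0+y_0=2t\neq0$.
\end{itemize}
Each integration by parts gains a factor $(r t)^{-1}$ against the polynomial growth in $r$, but only if the remaining arguments $r\overline\theta'$, $r\overline\omega'$ still provide decay. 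Checking this, and handling the degeneration of the coordinates at the set, is the delicate point. All bounds are locally uniform in $(x,y)$ with $t\ne0$.

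\emph{Step 3: asymptotics.} Substitute (\ref{xy-tending}). The phase becomes $r\bigl(X\theta'-Y\omega'+\tau(\theta\theta'-\omega\omega')\bigr)$, and the function $\theta\theta'-\omega\omega'$ on $S^d\times S^n$ has four critical points $(\pm\theta,\pm\omega)$.
\begin{itemize}
\item At $(\theta,\omega)$ the critical value is $0$ and the Hessian has signature $(-d,+n)$. Stationary phase in $(\theta',\omega')$ with large parameter $\tau|r|$ gives the factor $(2\pi/(\tau|r|))^{N/2}e^{i\,{\rm sgn}(r)\pi(n-d)/4}$. Combined with $|r|^{N+|\beta|}$, the Jacobian $|\theta_0+\omega_0|$ and $e^{irp}$, this yields exactly $\tau^{-N/2}F^\beta$ after the $r$-integration.
\item The point $(-\theta,-\omega)$ is the image of $(\theta,\omega)$ under $r\mapsto-r$ and is already accounted for by integrating over $r\in\mathbb R$. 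I would use the symmetry $(r,\theta',\omega')\mapsto(-r,-\theta',-\omega')$ to restrict to a half-domain, or equivalently localize with cutoffs.
\item At $(\theta,-\omega)$ and $(-\theta,\omega)$ the phase is $\pm2\tau r$. After the sphere stationary phase, the $r$-integral $\int e^{\pm2i\tau r}a(r)\,dr$ has an amplitude that is smooth except at $r=0$, where it behaves like $|r|^{N/2}$. That singularity contributes $O(\tau^{-N/2-1})$, which is within the required bound.
\end{itemize}

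\emph{Main obstacle: uniformity.} The stationary phase remainder is $O((\tau|r|)^{-N/2-1})$ only for $\tau|r|\gtrsim1$. I would split the $r$-integral at $|r|=\tau^{-1}$. For small $|r|$, estimate trivially by $\int_0^{1/\tau}r^{N+|\beta|}\,dr=O(\tau^{-N-1})$. For large $|r|$, the remainder contributes $\tau^{-N/2-1}\int|r|^{N/2-1+|\beta|}|\widetilde v_0|\,dr$, which converges thanks to the Schwartz decay. I would also have to verify that this decay survives in the sphere-localized pieces, where $\theta'_0+\omega'_0$ may be small; near $(\theta,\omega)$ it is not, by (\ref{nontang}). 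The resulting error $O(\tau^{-(N+1)/2})$ is cruder than what these estimates actually give, which leaves room for the bookkeeping.
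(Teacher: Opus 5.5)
Your computation of the principal term (stationary phase at $(\theta,\omega)$ with parameter $\tau|r|$, signature $n-d$, the cut at $|r|=\tau^{-1}$, and decay of $\widetilde v_0$ near that point by~(\ref{nontang})) is sound and matches Lemma~\ref{lemma-stat}. The gap is the region of $S^d\times S^n$ near $\{\theta'_0+\omega'_0=0\}$. Step~3 does not treat it at all, and Step~2 treats it with a tool that does not work.

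At the two directions $(\theta',\omega')=(\pm e_0,\mp e_0)$ (the paper's $\mathcal{C}_1$), the factor $\widetilde v_0(r(\theta'_0+\omega'_0),r\overline\theta',r\overline\omega')$ does not decay in $r$. It is concentrated in an angular neighbourhood of size $1/|r|$, so each spherical derivative of the amplitude costs a factor $|r|$, and integration by parts along the sphere gains nothing in $r$. Already for $\partial_{x_0}u$ the absolute integral near these directions behaves like $\int^\infty r^{-1}\,dr$. Your justification also fails. Since $\theta'_0$ is extremal there, the spherical gradient of $x\theta'-y\omega'$ at these points is $(\overline x,-\overline y)$, unrelated to $t$. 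What equals $\pm(x_0+y_0)=\pm2t$ is the value of the phase, that is, its derivative in $r$.

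The missing idea is integration by parts in the radial variable of the cone, $L=(x\zeta-y\sigma)^{-1}\partial_r$, after a dyadic localization $r\sim2^j$. An $r$-derivative falling on $\widetilde v_0(2^jrw)$ produces $2^jw\cdot\nabla\widetilde v_0(2^jrw)$, which stays bounded. So (\ref{aj-r-est}) holds and every step gains $(2^j\tau)^{-1}$ with no loss (Lemma~\ref{lemma-cover}). Summing over $j$ gives smoothness for $t\neq0$ and an $O(\tau^{-N-|\beta|})$ contribution.

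On the rest of $\{\theta'_0+\omega'_0=0\}$ the amplitude does decay in $r$ (there $|\overline\theta'|=|\overline\omega'|\neq0$), but it carries the kink $|\theta'_0+\omega'_0|$. So you must integrate by parts along fields tangent to this set. A purely spherical such field fails at critical points of $\Phi$ restricted to the set, which exist in general. The paper instead uses a field tangent to $\mathcal{C}_0$ inside the cone, possibly with a radial component; the needed nondegeneracy is~(\ref{ddPhi-ne0}) (Lemma~\ref{lemma-cover2}).

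The same issue breaks your treatment of the secondary critical points $(\theta,-\omega)$ and $(-\theta,\omega)$, where you assume an amplitude smooth in the angular variables. If $\theta_0=\omega_0$, these points lie on $\{\theta'_0+\omega'_0=0\}$. In the most basic case $\theta=\omega=e_0$ (the line $t=\tau$ with $s,\overline x,\overline y$ fixed, which satisfies~(\ref{nontang})), they are exactly $(e_0,-e_0)$ and $(-e_0,e_0)$. There, stationary phase on the sphere at fixed $r$ is unavailable: the amplitude is non-smooth, and for $|r|>\tau$ it varies on the scale $1/|r|$, finer than the stationary-phase scale $(\tau|r|)^{-1/2}$.

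The right viewpoint is stationarity of $\Phi$ on the cone $\mathcal{C}$, not on $S^d\times S^n$ for each fixed $r$. Along those rays $\partial_r(r\Phi)=\Phi=\pm2\neq0$, so they are not stationary at all. They are disposed of by the same radial (or $\mathcal{C}_0$-tangent) integration by parts, leaving $\mathcal{C}_{\rm st}$ as the only source of the leading term; see (\ref{dPhi-ne-0}) and Lemmas~\ref{lemma-cover}--\ref{lemma-cover3}.

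A minor point: in Step~1 you need $\eta_0=\lambda-\xi_0$, as your later relation $\lambda=r(\theta'_0+\omega'_0)$ presupposes.
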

Note that the integral in the relation for the coefficient $F^\beta(\theta, \omega, p)$ is absolutely convergent
due to condition~(\ref{nontang}) and rapid decay of the function $\tilde v_0$ at the infinity.

Asymptotics~(\ref{thm-asymp}) provides a full description of a solution in the following sense. The coefficient $F^\beta(\theta, \omega, p)$ is defined on the manifold $S^d\times S^n\times{\mathbb R}$ of the dimension $N+1$.
This coefficient turns out to be sufficient to determine the initial data $v_0$ (which are defined on the hyperplane of the same dimension) and thus the solution $v$ in the entire space ${\mathbb R}^{N+2}$~\cite{DMN-DD25}.

\section{The Fourier transform of the function $u(x,y)$}\label{sec-fourier}
It was shown in~\cite{char-setup} that for $v_0\in \mathcal{S}({\mathbb R}^{N+1})$,
the solution to problem~(\ref{eqn-psi}), (\ref{cauchy}) and thus the function $u(x,y)$ defined by~(\ref{uv})
are tempered distributions in ${\mathbb R}^{N+2}$.
For the generalized Fourier transform of $u(x,y)$ defined as
\begin{equation*}
  \hat u(\xi, \eta) = \int_{{\mathbb R}^{N+2}} e^{-i (x\xi - y\eta)} u(x, y) dx dy,
\end{equation*}
the following formula was obtained
\begin{equation}
  \hat u(\xi, \eta)
  = a(\xi, \eta) \,\delta\left(\xi^2 - \eta^2\right),
  \quad
  a(\xi, \eta) = 2\pi |\xi_0 + \eta_0|\, \widetilde v_0(\xi_0 + \eta_0, \overline\xi, \overline\eta).
  \label{uhat}
\end{equation}
In order to describe the precise meaning of the r.h.s. of the first equality in~(\ref{uhat}),
as well as for further use,
we introduce the following submanifolds of ${\mathbb R}^{N+2}$:
\begin{gather*}
  \Sigma = S^d\times S^n,
  \\
  \mathcal{C} = \left\{(\xi,\eta)\in {\mathbb R}^{N+2}\setminus\{0\}\,|\, \Upsilon(\xi,\eta) = 0\right\},
  \\
  \mathcal{C}_1 = \{(\xi,\eta)\in {\mathbb R}^{N+2}\setminus\{0\}\,|\,\, \Upsilon(\xi,\eta) = \Upsilon_0(\xi,\eta) = |\overline\xi| = |\overline\eta| = 0\},
  \\
  \mathcal{C}_0 = \{(\xi,\eta)\in {\mathbb R}^{N+2}\setminus\{0\}\,|\, \Upsilon(\xi,\eta) = \Upsilon_0(\xi,\eta) = 0, \, |\overline\xi| = |\overline\eta| \ne 0\},
\end{gather*}
where
\begin{equation*}
  \Upsilon(\xi,\eta) = \xi^2 - \eta^2,
  \quad
  \Upsilon_0(\xi,\eta) = \xi_0 + \eta_0,
\end{equation*}
Note that in the definitions of $\mathcal{C}_0$, $\mathcal{C}_1$, the condition $|\overline\xi| = |\overline\eta|$ is redundant and is given
for clarity only.
Clearly, $\mathcal{C}_0, \mathcal{C}_1 \subset \mathcal{C}$.

We will use the following diffeomorphism 
\begin{equation}
  \Sigma\times{\mathbb R}_+ \to \mathcal{C}, \quad (\zeta,\sigma,r) \mapsto (r\zeta, r\sigma).
  \label{diffeo}
\end{equation}
Introduce the volume form $\frac{1}{2} r^{N-1}dS_\zeta dS_\sigma dr$ on $\Sigma\times{\mathbb R}_+$,
where $dS_\zeta$, $dS_\sigma$ are the standard volume forms on spheres $S^d$, $S^n$, respectively.
Let $dS_{\xi,\eta}$ be the counterpart of this form induced on $\mathcal{C}$ by diffeomorphism~(\ref{diffeo}).
For further references, we record this formally as follows
\begin{equation}
  dS_{\xi,\eta} = \frac{1}{2} r^{N-1}dS_\zeta dS_\sigma dr.
  \label{dS-coord}
\end{equation}
Action of the distribution $\hat u(\xi,\eta)$ of the form~(\ref{uhat})
on a test function $\varphi(\xi,\eta)$ is defined by the equality
\begin{equation}
  \langle\hat u, \varphi\rangle =
  \int_\mathcal{C} (a\varphi)(\xi,\eta) \, dS_{\xi,\eta}.
  \label{uhat-C}
\end{equation}

In the proof of Theorem~\ref{thm}, formula~(\ref{uhat}) will be used.
Formal inversion of the Fourier transform in the latter yields
\begin{multline*}
  u(x,y) = (2\pi)^{-N-2} \int_\mathcal{C} a(\xi, \eta) e^{i (x\xi - y\eta)} dS_{\xi,\eta}
  \\= (2\pi)^{-N-1} \int_\mathcal{C} |\xi_0 + \eta_0| \, \tilde v_0(\xi_0 + \eta_0, \overline\xi, \overline\eta)
  e^{i (x\xi - y\eta)} dS_{\xi,\eta}.
\end{multline*}
For a point $(x,y)$ of the form~(\ref{xy-tending}), the last relation reads
\begin{equation}
  u(x, y) = (2\pi)^{-N-1} \int_\mathcal{C} |\xi_0 + \eta_0| \, \tilde v_0(\xi_0 + \eta_0, \overline\xi, \overline\eta)
  e^{i (X \xi - Y \eta)} e^{i \tau \Phi(\xi,\eta)} dS_{\xi,\eta},
  \label{u-as}
\end{equation}
where
\begin{equation*}
  \Phi(\xi,\eta) = \theta\xi - \omega\eta.
\end{equation*}
Note that the integral here is not absolutely convergent.
The reason is that the arguments of the function $\tilde v_0$ in the integrand may stay bounded when the point $(\xi,\eta)$
tends to infinity along $\mathcal{C}_1\subset\mathcal{C}$.
The integral converges due to oscillations of the exponential in the neighborhood of this set.
Besides, the integrand is nonsmooth at $\xi_0 + \eta_0 = 0$, i.e. on the set $\mathcal{C}_0\cup\mathcal{C}_1$.
This complicates the analysis of smoothness and asymptotic behavior of the solutions,
as compared to the case of hyperbolic equations.

\section{Littlewood-Paley decomposition of the function $u(x,y)$}\label{sec-little-paley}
In our study of the distribution $u(x,y)$, we will consider its Littlewood-Paley decomposition,
which is defined in terms of the Fourier transform $\hat u(\xi,\eta)$.
Let a function $\tilde\chi(\rho)$ satisfy
\begin{equation}
  \tilde\chi\in C_0^\infty({\mathbb R}_+), \quad
  \sum_{j\in{\mathbb Z}} \tilde\chi(2^{-j} \rho) = 1, \quad \rho>0
  \label{sum-chi}
\end{equation}
(for any positive $\rho$, the series on the l.h.s. of the last equality contains finite number of nonzero terms).
Introduce the following functions
\begin{equation*}
  \chi(\xi,\eta) = \tilde\chi\left(\sqrt{(\xi^2+\eta^2)/2}\right),
  \quad
  \chi_j(\xi,\eta) = \chi(2^{-j} \xi, 2^{-j} \eta), 
  \quad j\in{\mathbb Z}.
\end{equation*}
The restrictions $\chi_j|_\mathcal{C}$ are compactly supported functions in $\mathcal{C}$
satisfying, by properties~(\ref{sum-chi}), the following relation
\begin{equation*}
  \sum_{j\in{\mathbb Z}} \chi_j(\xi,\eta) = 1.
\end{equation*}
Since the amplitude $a(\xi,\eta)$ is a bounded function,
this equality implies that
formula~(\ref{uhat-C}) can be written as the following relation
\begin{equation*}
  \left\langle\hat u, \varphi\right\rangle = \sum_{j\in{\mathbb Z}} \int_\mathcal{C} (\chi_j a \varphi)(\xi,\eta) \, dS_{\xi,\eta},
\end{equation*}
which essentially is the Littlewood-Paley decomposition of the distribution $u(x,y)$.
This can be generalized on the partial derivatives $\partial^\beta u$ as follows
\begin{equation}
  \left\langle\widehat{\partial^\beta u}, \varphi\right\rangle = \sum_{j\in{\mathbb Z}} \int_\mathcal{C} (\chi_j a^\beta\varphi)(\xi,\eta) \, dS_{\xi,\eta},
  \label{hatu-sum}
\end{equation}
where
\begin{equation}
  a^\beta(\xi,\eta) = (P^\beta a)(\xi,\eta)
  \label{abe}
\end{equation}
(the polynomial $P^\beta$ was defined in the formulation of Theorem~\ref{thm}).
Observe that the terms of the series in~(\ref{hatu-sum}) are still well-defined,
if a test function $\varphi(\xi,\eta)$ is replaced by the function
\begin{equation*}
  (2\pi)^{-N-2} e^{i(x\xi - y\eta)}
\end{equation*}
for arbitrary $(x,y)$.
As will be shown in sec.~\ref{sec-proof}
(which will be the principal part of the proof of Theorem~\ref{thm}),
such a series is absolutely convergent locally uniformly with respect to $x,y$ outside of the hyperplane~(\ref{char-plane}).
This implies that outside this hyperplane,
the sum of this series gives exactly the value of 
the function $\partial^\beta u(x,y)$: 
\begin{equation}
  \partial^\beta u(x,y) = (2\pi)^{-N-2} \sum_{j\in{\mathbb Z}} \int_\mathcal{C} (\chi_j a^\beta)(\xi,\eta) e^{i(x\xi - y\eta)} \, dS_{\xi,\eta}.
  \label{u-xy}
\end{equation}
To see this, consider an arbitrary test function $\psi(x,y)$ with a compact support outside of the hyperplane~(\ref{char-plane}).
By substituting the expression for its inverse Fourier transform
\begin{equation*}
  \varphi(\xi,\eta) = (2\pi)^{-N-2} \int_{{\mathbb R}^{N+2}} e^{i(x\xi - y\eta)} \psi(x, y) \,dx dy
\end{equation*}
into the r.h.s. of~(\ref{hatu-sum}), we obtain
\begin{equation*}
  \left\langle\partial^\beta u, \psi\right\rangle = \left\langle\widehat{\partial^\beta u}, \varphi\right\rangle
  = (2\pi)^{-N-2} \sum_{j\in{\mathbb Z}} \int_{{\mathbb R}^{N+2}} dx dy\, \psi(x, y) \int_\mathcal{C} (\chi_j a^\beta)(\xi,\eta) e^{i(x\xi - y\eta)} \, dS_{\xi,\eta}
\end{equation*}
(we have changed the order of integration with respect to $x,y$ and $\xi,\eta$).
This is equivalent to
\begin{equation*}
  \left\langle\partial^\beta u, \psi\right\rangle 
  = (2\pi)^{-N-2} \int_{{\mathbb R}^{N+2}} dx dy\, \psi(x, y) \sum_{j\in{\mathbb Z}} \int_\mathcal{C} (\chi_j a^\beta)(\xi,\eta) e^{i(x\xi - y\eta)} \, dS_{\xi,\eta},
\end{equation*}
since the function $\psi(x,y)$ is compactly supported, and the series under the integral sign
converges uniformly with respect to $(x,y) \in {\rm supp}\, \psi$.
This implies equality~(\ref{u-xy}).

It is convenient to transform the terms of the series~(\ref{u-xy}) as follows.
By taking $2^{-j} \xi$, $2^{-j}\eta$ as new variables of integration
and invoking the definition of the volume form~(\ref{dS-coord}),
we arrive at the equivalent relation
\begin{equation}
  \partial^\beta u(x,y) = \sum_{j\in{\mathbb Z}} (2\pi)^{-N-2}\, 2^{N j} I^\beta_j(x,y), 
  \label{sum-CC}
\end{equation}
where
\begin{equation}
  I^\beta_j(x,y) = \int_\mathcal{C} a^\beta_j(\xi, \eta) e^{i 2^j(x\xi - y\eta)} \,dS_{\xi,\eta},
  \quad
  a^\beta_j(\xi,\eta) = \chi(\xi,\eta) a^\beta(2^j \xi, 2^j \eta).
  \label{ajbe}
\end{equation}

Remarks concerning the integral~(\ref{u-as}) made in the end of sec.~\ref{sec-fourier}
are relevant 
to the integrals $I^\beta_j$:
the integrand does not decay for large $j$ and $(\xi,\eta)\in \mathcal{C}_1$,
and has a singularity at $(\xi,\eta)\in \mathcal{C}_0\cup\mathcal{C}_1$.
For this reason, we will decompose $I^\beta_j$ to the sum of the integrals of the form
\begin{equation*}
  I^\beta_{\varkappa,j}(x,y) = \int_\mathcal{C} (\varkappa a^\beta_j)(\xi, \eta) e^{i 2^j (x\xi - y\eta)} dS_{\xi,\eta}
\end{equation*}
with smooth cut-off functions $\varkappa$.
We will consider the cases when these functions are supported near $\mathcal{C}_0$, $\mathcal{C}_1$,
and conclude that the corresponding integrals do not contribute to the principal term of asymptotics~(\ref{thm-asymp}).
We will see that the latter is determined by stationary points
of the restriction of the phase function $\Phi(\xi,\eta)$ to $\mathcal{C}$.
These points constitute the rays
\begin{equation}
  \mathcal{C}_{\rm st} = \{ (\pm r\theta, \pm r\omega) \,|\, r>0\} \subset \mathcal{C}\setminus(\mathcal{C}_0\cup\mathcal{C}_1)
  \label{stat-ray}
\end{equation}
(the signs $\pm$ are assumed to be equal).
The last inclusion follows from condition~(\ref{nontang}).

\section{Auxiliary assertions}
Note that the differentials $d\Upsilon$, $d\Upsilon_0$ are nonzero everywhere in ${\mathbb R}^{N+2}\setminus\{0\}$.
Besides, they are linearly independent at points $(\xi,\eta) \in \mathcal{C}_0$,
since the contrary means that $d\Upsilon = \alpha d\Upsilon_0$,
whence
\begin{equation*}
  \overline\xi = \frac{1}{2}\partial_{\overline\xi}\Upsilon = \frac{\alpha}{2}\partial_{\overline\xi}\Upsilon_0 = 0,
\end{equation*}
which contradicts the definition of the set $\mathcal{C}_0$.

Thus the system of equations $\Upsilon = \Upsilon_0 = 0$,
which locally determines the set $\mathcal{C}_0$, is nondegenerate,
and so $\mathcal{C}_0$ is a smooth submanifold of ${\mathbb R}^{N+2}$ of codimension two.
We will need, however, the fact that $\mathcal{C}_0$ is a smooth submanifold of $\mathcal{C}$ of codimension one.
To verify this, we use the representation
\begin{equation}
  \mathcal{C}_0 = \{(\xi,\eta)\in \mathcal{C}\setminus\mathcal{C}_1\,|\, \Upsilon_0(\xi,\eta) = 0\},
  \label{C0C}
\end{equation}
which means that the set $\mathcal{C}_0$, considered as a subset of $\mathcal{C}$, is locally determined
as a level set of the restriction $\Upsilon_0|_\mathcal{C}$.
Therefore, it suffices to check that 
\begin{equation}
  T_{(\xi,\eta)}^*\mathcal{C}\ni d(\Upsilon_0|_\mathcal{C})(\xi,\eta) \ne 0, \quad (\xi,\eta) \in \mathcal{C}_0. 
  \label{Ups0-nondeg}
\end{equation}
The contrary to this implies
\begin{equation*}
  d\Upsilon_0(\xi,\eta)\big|_{T_{(\xi,\eta)}\mathcal{C}} = d(\Upsilon_0|_\mathcal{C})(\xi,\eta) = 0.
\end{equation*}
Then the differential $d\Upsilon_0(\xi,\eta)$
vanishes on the subspace $T_{(\xi,\eta)}\mathcal{C} \subset T_{(\xi,\eta)}{\mathbb R}^{N+2}$ of codimension one.
Thus, by the definition of the manifold $\mathcal{C}$,
it is proportional to the differential $d\Upsilon(\xi,\eta)$.
We have seen, however, that these two differentials are linearly independent
on the set $\mathcal{C}_0$.

\begin{lemma}
  Under the assumption~(\ref{nontang}),
  the function $\Phi(\xi,\eta)$ in ${\mathbb R}^{N+2}$ satisfies
\begin{gather}
  d\Phi(\xi,\eta)\big|_{T_{(\xi,\eta)}\mathcal{C}_0} \ne 0, \quad (\xi,\eta)\in \mathcal{C}_0.
  \label{ddPhi-ne0}
  \\
  d\Phi(\xi,\eta)\big|_{T_{(\xi,\eta)}\mathcal{C}} \ne 0, \quad (\xi,\eta)\in \mathcal{C}\setminus\mathcal{C}_{\rm st}.
  \label{dPhi-ne-0}
\end{gather}
\end{lemma}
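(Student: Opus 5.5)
We argue by contradiction, using Lagrange multipliers. In both cases the vanishing of $d\Phi$ on the tangent space means that $d\Phi$ lies in the span of the differentials of the defining functions. For~(\ref{dPhi-ne-0}) this is $d\Upsilon$, since $\mathcal{C}$ is a nondegenerate level set of $\Upsilon$. For~(\ref{ddPhi-ne0}) it is $d\Upsilon$ and $d\Upsilon_0$, since $\mathcal{C}_0$ is locally given by the nondegenerate system $\Upsilon=\Upsilon_0=0$, as shown above. Recall that $d\Phi=(\theta,-\omega)$, $d\Upsilon=2(\xi,-\eta)$ and $d\Upsilon_0=(e_0,e_0)$, where $e_0$ denotes the first basis vector in ${\mathbb R}^{d+1}$ or ${\mathbb R}^{n+1}$.

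\textbf{Proof of (\ref{dPhi-ne-0}).} Suppose $d\Phi=\alpha\, d\Upsilon$ at some $(\xi,\eta)\in\mathcal{C}$. Then $\theta=2\alpha\xi$ and $\omega=2\alpha\eta$. Since $|\theta|=1$, we have $\alpha\neq0$. Hence $\xi=\theta/(2\alpha)$ and $\eta=\omega/(2\alpha)$. Setting $r=1/(2|\alpha|)$, this gives $(\xi,\eta)=(\pm r\theta,\pm r\omega)$ with equal signs, i.e.\ $(\xi,\eta)\in\mathcal{C}_{\rm st}$. This contradicts the assumption $(\xi,\eta)\notin\mathcal{C}_{\rm st}$. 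Note that $|\xi|=|\eta|$ holds automatically, so the relation is consistent with $\mathcal{C}$.

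\textbf{Proof of (\ref{ddPhi-ne0}).} Suppose $d\Phi=\alpha\,d\Upsilon+\beta\,d\Upsilon_0$ at some $(\xi,\eta)\in\mathcal{C}_0$. Comparing components, we get
\begin{equation*}
  \theta_0=2\alpha\xi_0+\beta,\quad \overline\theta=2\alpha\overline\xi,\quad -\omega_0=-2\alpha\eta_0+\beta,\quad \overline\omega=2\alpha\overline\eta.
\end{equation*}
We eliminate $\beta$ and use $\eta_0=-\xi_0$, which holds on $\mathcal{C}_0$. This yields
\begin{equation*}
  \theta_0+\omega_0=2\alpha(\xi_0+\eta_0)=0,
\end{equation*}
which contradicts~(\ref{nontang}). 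Here $\alpha$ cannot vanish, but we do not even need that fact.

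The only delicate point is justifying the multiplier characterization on $\mathcal{C}_0$. This requires that $\mathcal{C}_0$ be locally the common zero set of $\Upsilon$ and $\Upsilon_0$ with $d\Upsilon$ and $d\Upsilon_0$ linearly independent there. That is exactly what was established before the lemma, so $T\mathcal{C}_0=\ker d\Upsilon\cap\ker d\Upsilon_0$. A covector vanishing on this intersection therefore lies in the span of $d\Upsilon$ and $d\Upsilon_0$.
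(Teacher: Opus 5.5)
Your proposal is correct and follows essentially the same route as the paper. In both parts you write $d\Phi$ as a combination of the conormal covectors: $d\Upsilon$ for $\mathcal{C}$, and $d\Upsilon, d\Upsilon_0$ for $\mathcal{C}_0$, relying on their independence established before the lemma. You then either compare the $\xi_0,\eta_0$ components using $\xi_0+\eta_0=0$ to contradict $\theta_0+\omega_0\ne 0$, or solve $\theta=2\alpha\xi$, $\omega=2\alpha\eta$ with $\alpha\ne0$ to land in $\mathcal{C}_{\rm st}$.
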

\begin{proof}
Elements of the space $T_{(\xi,\eta)}^* {\mathbb R}^{N+2}$ that vanish on $T_{(\xi,\eta)}\mathcal{C}_0$
form a two-dimensional subspace.
As was shown before, the differentials $d\Upsilon$, $d\Upsilon_0$ form a basis in the latter.
So the contrary to relation~(\ref{ddPhi-ne0}) means that for some $\alpha, \alpha_0\in{\mathbb R}$, we have
\begin{equation*}
  d\Phi(\xi,\eta) = \alpha d\Upsilon(\xi,\eta) + \alpha_0 d\Upsilon_0(\xi,\eta).
\end{equation*}
Then, in view of equalities
\begin{gather*}
  \partial_{\xi_0} \Phi = \theta_0, \quad \partial_{\eta_0} \Phi = -\omega_0,
  \\
  \partial_{\xi_0} \Upsilon = 2 \xi_0,
  \quad
  \partial_{\eta_0} \Upsilon = -2 \eta_0,
  \\
  \partial_{\xi_0} \Upsilon_0 = 1,
  \quad
  \partial_{\eta_0} \Upsilon_0 = 1,
\end{gather*}
we deduce that
\begin{equation*}
  \theta_0 = 2\alpha \xi_0 + \alpha_0,
  \quad
  -\omega_0 = -2\alpha \eta_0 + \alpha_0.
\end{equation*}
Taking into account the equality $\xi_0 + \eta_0 = 0$ satisfied on $\mathcal{C}_0$,
we come to a contradiction with assumption~(\ref{nontang}).

Now turn to the proof of~(\ref{dPhi-ne-0}).
The contrary to the latter means that
\begin{equation*}
  d\Phi(\xi,\eta) = \alpha d\Upsilon(\xi,\eta)
\end{equation*}
for some $\alpha\in{\mathbb R}$.
This can be written in the form of the equalities 
\begin{equation*}
  \partial_\xi\Phi(\xi,\eta) = 2 \alpha \xi, \quad \partial_\eta\Phi(\xi,\eta) = -2\alpha\eta,
\end{equation*}
which are equivalent to
\begin{equation*}
  \theta = 2 \alpha \xi, \quad \omega = 2 \alpha \eta.
\end{equation*}
Hence $\alpha\ne 0$, and the point $(\xi,\eta)$ belongs to $\mathcal{C}_{\rm st}$, 
which contradicts the condition of Lemma.
\end{proof}

In the following lemmas we obtain certain estimates of functions $a^\beta_j$.
Henceforth we will not indicate explicitly that the constants in estimates depend on
$\theta$, $\omega$, $R$, $\beta$, $\chi$, $v_0$.

\begin{lemma}
  For $(\zeta,\sigma,r)\in\Sigma\times{\mathbb R}_+$, we have
\begin{equation}
  |\partial_r^k (a^\beta_j(r\zeta, r\sigma))| \leqslant C_k 2^{j(1+|\beta|)}, \quad k\geqslant 0, \quad j\in{\mathbb Z}.
  \label{aj-r-est}
\end{equation}
\end{lemma}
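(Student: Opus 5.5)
Write $a^\beta_j(r\zeta,r\sigma)$ explicitly by means of (\ref{uhat}), (\ref{abe}), (\ref{ajbe}):
\begin{equation*}
  a^\beta_j(r\zeta,r\sigma) = 2\pi\, \tilde\chi(r)\, 2^{j|\beta|} r^{|\beta|} P^\beta(\zeta,\sigma)\, 2^j r\,|\zeta_0+\sigma_0|\,
  \widetilde v_0\bigl(2^j r(\zeta_0+\sigma_0), 2^j r\overline\zeta, 2^j r\overline\sigma\bigr).
\end{equation*}
Here we used that $\chi(r\zeta,r\sigma)=\tilde\chi(\sqrt{(r^2+r^2)/2})=\tilde\chi(r)$, since $\zeta^2=\sigma^2=1$, and that $P^\beta$ is homogeneous of degree $|\beta|$. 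The factor $2^{j(1+|\beta|)}$ in (\ref{aj-r-est}) is exactly the prefactor $2^{j|\beta|}\cdot 2^j$ appearing here. It therefore remains to bound the $r$-derivatives of the rest.

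The remaining function is a product of three pieces. The first is $\tilde\chi(r) r^{|\beta|+1}$, which is smooth and compactly supported in some interval $[c_1,c_2]\subset{\mathbb R}_+$. The second is the bounded constant $P^\beta(\zeta,\sigma)|\zeta_0+\sigma_0|$, with $|P^\beta|\le 1$ and $|\zeta_0+\sigma_0|\le 2$. The third is $g_j(r) := \widetilde v_0(2^j r w)$, where $w=(\zeta_0+\sigma_0,\overline\zeta,\overline\sigma)$. Apply the Leibniz rule and estimate each factor separately on $[c_1,c_2]$.

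The key step is $g_j$, because naive differentiation produces factors $2^j$, which blow up as $j\to+\infty$. By the chain rule,
\begin{equation*}
  \partial_r^m g_j(r) = \sum_{|\gamma|=m} c_\gamma (2^j w)^\gamma (\partial^\gamma\widetilde v_0)(2^j r w).
\end{equation*}
Since $r\ge c_1$, we have $|2^j w|\le c_1^{-1}|2^j r w|$. Hence each term is bounded by $C\,|z|^m|\partial^\gamma \widetilde v_0(z)|$ with $z=2^j r w$. This is uniformly bounded because $\widetilde v_0\in\mathcal S$ (the Fourier transform of a Schwartz function) and $|w|\le 2$. For $j\to-\infty$ the factor $2^j$ is small, so the same bound holds trivially. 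Thus $|\partial^m_r g_j|\le C_m$ uniformly in $j\in{\mathbb Z}$ and $(\zeta,\sigma)\in\Sigma$.

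Combining the three bounds through the Leibniz rule gives (\ref{aj-r-est}). The only genuine obstacle is the uniformity in $j$ in the previous step. It is resolved by converting powers of $2^j$ into powers of the argument of $\widetilde v_0$ and absorbing them by Schwartz decay, which is possible because $r$ stays away from $0$ on ${\rm supp}\,\tilde\chi$.
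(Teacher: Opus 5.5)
Your proof is correct and follows essentially the same route as the paper. You write $a^\beta_j(r\zeta,r\sigma)$ explicitly and pull out $2^{j(1+|\beta|)}$. You then control the $r$-derivatives of $\widetilde v_0(2^j r w)$ by trading each factor $2^j w$ for the argument $2^j r w$, which is possible since $r\geqslant c_1>0$ on ${\rm supp}\,\tilde\chi$, and absorb it by Schwartz decay. The paper does this via $2^{kj}\rho^k(1+2^j\rho)^{-k}\leqslant C_k$ with $\rho=|\zeta_0+\sigma_0|+|\overline\zeta|+|\overline\sigma|$, which is equivalent to your direct bound on $|z|^m|\partial^\gamma\widetilde v_0(z)|$. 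Your separate remark about $j\to-\infty$ is unnecessary, since your bound is already uniform in $j\in{\mathbb Z}$.
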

\begin{proof}
Since $P^\beta(2^j r\zeta, 2^j r\sigma) = (2^j r)^{|\beta|} P^\beta(\zeta,\sigma)$,
by definitions~(\ref{ajbe}), (\ref{abe}), (\ref{uhat}), we have
\begin{equation}
  a^\beta_j(r\zeta, r\sigma) = 
  2\pi (2^j r)^{1+|\beta|} P^\beta(\zeta,\sigma) \tilde\chi(r) |\zeta_0 + \sigma_0|\, \tilde v_0(2^j r (\zeta_0 + \sigma_0), 2^j r\overline\zeta, 2^j r\overline\sigma).
  \label{ajbe-full}
\end{equation}
Hence it suffices to show that
\begin{equation}
  |(\zeta_0 + \sigma_0)\, \partial_r^k(\tilde v_0(2^j r (\zeta_0 + \sigma_0), 2^j r\overline\zeta, 2^j r\overline\sigma))| \leqslant C_k, \quad k\geqslant 0,
  \quad r\in{\rm supp}\tilde\chi.
  \label{psi-deriv-est}
\end{equation}
The expression under the modulus sign equals
\begin{equation*}
  \sum_{\alpha+|\nu|+|\gamma|=k} C_{\alpha,\nu,\gamma} 2^{k j} (\zeta_0 + \sigma_0)^{\alpha+1}\, \overline\zeta^\nu\, \overline\sigma^\gamma
  \left(\partial_{\lambda,\overline\xi,\overline\eta}^{\alpha,\nu,\gamma}\, \tilde v_0\right)(2^j r (\zeta_0 + \sigma_0), 2^j r\overline\zeta, 2^j r\overline\sigma).
\end{equation*}
We have
\begin{equation*}
  |(\zeta_0 + \sigma_0)^{\alpha+1}\, \overline\zeta^\nu\, \overline\sigma^\gamma| 
  \leqslant C_k |\zeta_0 + \sigma_0|^{\alpha+1} \cdot |\overline\zeta|^{|\nu|} \cdot |\overline\sigma|^{|\gamma|}
  \leqslant C_k \rho^{k+1} \leqslant C_k \rho^k,
\end{equation*}
where $\rho := |\zeta_0 + \sigma_0| + |\overline\zeta| + |\overline\sigma|$.
At the same time, since $\tilde v_0\in \mathcal{S}({\mathbb R}^{N+1})$,
the partial derivative in the expression above is majorized by
\begin{equation*}
  C_{k,M} \left(1 + 2^j \rho\right)^{-M}, \quad M \geqslant 0 
\end{equation*}
(we took into account that the variable $r\in{\rm supp}\tilde\chi$ is separated from zero).
Letting $M = k$ here, we estimate the whole expression by
\begin{equation*}
  \frac{C_k 2^{k j} \rho^k}{\left(1 + 2^j \rho\right)^k}
  \leqslant C_k.
\end{equation*}
This implies estimate~(\ref{psi-deriv-est}) and thus~(\ref{aj-r-est}) holds true.
\end{proof}

For a function $f(\xi,\eta)$ in ${\mathbb R}^{N+2}$, a fixed point $(\xi,\eta)\in{\mathbb R}^{N+2}$, and an integer $k\geqslant 0$,
we set
\begin{equation*}
  |f|_{\xi,\eta,k} = \max_{|\alpha|\leqslant k} |\partial^\alpha f(\xi,\eta)|.
\end{equation*}
Note the evident inequality
\begin{equation}
  |f g|_{\xi,\eta,k} \leqslant C_\mu |f|_{\xi,\eta,k} \cdot |g|_{\xi,\eta,k}.
  \label{prod-est}
\end{equation}

\begin{lemma}
  For $(\xi,\eta)\in{\mathbb R}^{N+2}$, $\xi_0 + \eta_0 \ne 0$, $j\in{\mathbb Z}$, $M\geqslant 0$, we have
\begin{equation}
  |a^\beta_j|_{\xi,\eta,k}
  \leqslant \frac{C_{k, M} 2^{(1+|\beta|)j} (1 + 2^{k j})}{(1 + 2^j (|\xi_0 + \eta_0| + |\overline\xi| + |\overline\eta|))^M}.
  \label{aj-full-est}
\end{equation}
\end{lemma}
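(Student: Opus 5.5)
Write $a^\beta_j$ in full Cartesian form, as in~(\ref{ajbe-full}) but without passing to polar coordinates:
\begin{equation*}
  a^\beta_j(\xi,\eta) = 2\pi\, 2^{(1+|\beta|)j}\, \chi(\xi,\eta)\, P^\beta(\xi,\eta)\, |\xi_0+\eta_0|\, \tilde v_0\bigl(2^j(\xi_0+\eta_0), 2^j\overline\xi, 2^j\overline\eta\bigr),
\end{equation*}
using $P^\beta(2^j\xi,2^j\eta)=2^{|\beta|j}P^\beta(\xi,\eta)$. Since $\xi_0+\eta_0\ne0$, the factor $|\xi_0+\eta_0|$ is locally $\pm(\xi_0+\eta_0)$ and hence smooth near the point. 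By~(\ref{prod-est}) it suffices to bound the $|\cdot|_{\xi,\eta,k}$ norm of each factor.

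We may assume $(\xi,\eta)\in{\rm supp}\,\chi$, since otherwise the left-hand side vanishes. Then $(\xi,\eta)$ lies in a fixed compact set separated from zero. The factor $\chi P^\beta$ is a fixed smooth function, so its derivatives up to order $k$ are bounded by $C_k$. The remaining factor is
\begin{equation*}
  g_j(\xi,\eta) = (\xi_0+\eta_0)\, \tilde v_0\bigl(2^j(\xi_0+\eta_0), 2^j\overline\xi, 2^j\overline\eta\bigr),
\end{equation*}
up to sign. By the Leibniz rule, $\partial^\alpha g_j$ with $|\alpha|\le k$ is a sum of two kinds of terms. Terms where no derivative falls on the linear factor are
\begin{equation*}
  (\xi_0+\eta_0)\,2^{|\alpha|j}(\partial^\gamma\tilde v_0)(\cdots), \quad |\gamma|=|\alpha|.
\end{equation*}
Terms where one derivative falls on it are
\begin{equation*}
  2^{(|\alpha|-1)j}(\partial^{\gamma'}\tilde v_0)(\cdots), \quad |\gamma'|=|\alpha|-1.
\end{equation*}

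Put $\rho=|\xi_0+\eta_0|+|\overline\xi|+|\overline\eta|$. Since $\tilde v_0\in\mathcal S$, every derivative satisfies
\begin{equation*}
  |(\partial^\gamma\tilde v_0)(2^j(\xi_0+\eta_0),2^j\overline\xi,2^j\overline\eta)|\le C_{\gamma,M}(1+2^j\rho)^{-M}
\end{equation*}
for any $M$. The factor $|\xi_0+\eta_0|\le C$ on the support. Each power $2^{mj}$ with $0\le m\le k$ satisfies $2^{mj}\le 1+2^{kj}$: for $j\ge0$ it is at most $2^{kj}$, and for $j<0$ it is at most $1$. Collecting these bounds gives $|g_j|_{\xi,\eta,k}\le C_{k,M}(1+2^{kj})(1+2^j\rho)^{-M}$. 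Multiplying by the bound for $\chi P^\beta$ and by $2^{(1+|\beta|)j}$ yields~(\ref{aj-full-est}).

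There is no real obstacle here. Unlike in the previous lemma, we do not try to absorb the powers $2^{kj}$ into the decay using the factor $\zeta_0+\sigma_0$; the stated bound keeps $1+2^{kj}$ explicitly. The only point needing care is that $|\xi_0+\eta_0|$ is differentiable exactly because $\xi_0+\eta_0\ne0$ is assumed. The constants are independent of the point, since derivatives of $|\xi_0+\eta_0|$ of order at least two vanish and the first derivatives equal $\pm1$.
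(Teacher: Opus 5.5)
Your proof is correct and follows essentially the same route as the paper. You use the Cartesian formula for $a^\beta_j$, the product estimate~(\ref{prod-est}), the chain-rule factor $2^{|\alpha|j}$ together with Schwartz decay of $\tilde v_0$, and the bound $2^{mj}\le 1+2^{kj}$. The only difference is cosmetic: the paper groups $|\xi_0+\eta_0|$ with $\chi P^\beta$ into the uniformly bounded factor $\widetilde P^\beta$, whereas you attach it to the $\tilde v_0$ factor and expand by Leibniz.
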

\begin{proof}
According to definitions~(\ref{ajbe}), (\ref{abe}), (\ref{uhat}), we have
\begin{equation*}
  a^\beta_j(\xi,\eta) = 2\pi\, 2^{(1+|\beta|)j} \chi(\xi,\eta) P^\beta(\xi,\eta) |\xi_0 + \eta_0|\, \tilde v_0(2^j(\xi_0 + \eta_0), 2^j \overline\xi, 2^j \overline\eta)
\end{equation*}
(this equality is analogous to~(\ref{ajbe-full})).
Put 
\begin{equation*}
  \widetilde P^\beta(\xi,\eta) := \chi(\xi,\eta) P^\beta(\xi,\eta) |\xi_0 + \eta_0|
\end{equation*}
and apply inequality~(\ref{prod-est}):
\begin{equation*}
  |a^\beta_j|_{\xi,\eta,k}
  \leqslant C_k\,2^{(1+|\beta|)j} |\widetilde P^\beta|_{\xi,\eta,k}
  \cdot \max_{|\alpha|\leqslant k} |\partial^\alpha (\tilde v_0(2^j(\xi_0 + \eta_0), 2^j \overline\xi, 2^j \overline\eta))|.
\end{equation*}
Next,
\begin{multline*}
  |\partial^\alpha (\tilde v_0(2^j(\xi_0 + \eta_0), 2^j \overline\xi, 2^j \overline\eta))|
  = 2^{|\alpha| j} |(\partial^\alpha \tilde v_0)(2^j(\xi_0 + \eta_0), 2^j \overline\xi, 2^j \overline\eta))|
  \\\leqslant \frac{C_{\alpha, M} 2^{|\alpha| j}}{(1 + 2^j (|\xi_0 + \eta_0| + |\overline\xi| + |\overline\eta|))^M},
\end{multline*}
so the maximum over $\alpha$ in the preceding expression does not exceed
\begin{equation*}
  \frac{C_{k, M} (1 + 2^{k j})}{(1 + 2^j (|\xi_0 + \eta_0| + |\overline\xi| + |\overline\eta|))^M}.
\end{equation*}
Then, since $|\widetilde P^\beta|_{\xi,\eta,k} \leqslant C_{\beta,k}$,
we arrive at inequality~(\ref{aj-full-est}).
\end{proof}

We will apply estimate~(\ref{aj-full-est}) only in the case,
when a point $(\xi,\eta)$ is separated from $\mathcal{C}_1$.
Namely, let $\mathcal{K}$ be a compact subset of $\mathcal{C}\setminus\mathcal{C}_1$.
It follows from the definition of the set $\mathcal{C}_1$ that
for some $\varepsilon_\mathcal{K}>0$, we have
\begin{equation*}
  |\xi_0 + \eta_0| + |\overline\xi| + |\overline\eta| \geqslant \varepsilon_\mathcal{K}, \quad (\xi,\eta)\in\mathcal{K}.
\end{equation*}
Combined with estimate~(\ref{aj-full-est}), this yields
\begin{equation}
  |a^\beta_j|_{\xi,\eta,k}
  \leqslant \frac{C_{k, M, \mathcal{K}} 2^{(1+|\beta|)j}}{(1 + 2^j)^M}, \quad (\xi,\eta)\in\mathcal{K}.
  \label{aj-est}
\end{equation}

\section{Estimates of the integrals $I^\beta_{\varkappa, j}$}
Henceforth we will denote integrals $I^\beta_j(x,y)$ and $I^\beta_{\varkappa, j}(x,y)$
by $I^\beta_j(\tau)$ and $I^\beta_{\varkappa, j}(\tau)$, respectively, in the case of
a point $(x,y)$ given by~(\ref{xy-tending}). 

In this section, we will obtain certain estimates of the integrals $I^\beta_{\varkappa, j}(\tau)$
and inspect their asymptotic behavior as $\tau\to+\infty$,
depending on the choice of the cut-off function $\varkappa$ on $\mathcal{C}$.
In Lemmas~\ref{lemma-cover}, \ref{lemma-cover2}, \ref{lemma-cover3},
we assume that the support of the latter is separated from $\mathcal{C}_{\rm st}$,
in which case the following estimate will be established
\begin{equation}
  |I^\beta_{\varkappa, j}(\tau)| \leqslant \frac{C_{\varkappa, \tau_0}}{2^{N j + |j|} \tau^{N + |\beta|}}, \quad \tau\geqslant\tau_0>0, \quad j\in{\mathbb Z}.
  \label{Ikj-est}
\end{equation}
(Recall that the dependence of constants on $\theta$, $\omega$, $R$, $\beta$, $\chi$, $v_0$ in estimates
is not indicated explicitly.)
Lemma~\ref{lemma-stat} concerns the case when the support of the function $\varkappa$
intersects with $\mathcal{C}_{\rm st}$. 

Everywhere in the text all of the neighborhoods are assumed to be open sets.

\begin{lemma}\label{lemma-cover}
Under the assumptions of Theorem~\ref{thm},
every point $(\xi^*,\eta^*)\in\mathcal{C}_1$ has a neighborhood $V\subset\mathcal{C}$, depending on $\theta,\omega,\tau_0$, and $R$
such that~(\ref{Ikj-est}) holds true for arbitrary function $\varkappa\in C_0^\infty(V)$.
\end{lemma}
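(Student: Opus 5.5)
The plan is to integrate by parts only in the radial variable $r$ of the diffeomorphism~(\ref{diffeo}). Lemma~2 (estimate~(\ref{aj-r-est})) controls the radial derivatives of $a^\beta_j$ uniformly in $j$, whereas derivatives in the other directions (Lemma~3) blow up near $\mathcal{C}_1$.

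\textbf{Step 1: the phase at $\mathcal{C}_1$.} In the coordinates $(\zeta,\sigma,r)$ we have
\begin{equation*}
  I^\beta_{\varkappa,j}(\tau)=\frac12\int_\Sigma\int_0^\infty (\varkappa a^\beta_j)(r\zeta,r\sigma)\,e^{i2^jr\left(X\zeta-Y\sigma+\tau\Phi(\zeta,\sigma)\right)}r^{N-1}\,dr\,dS_\zeta\,dS_\sigma .
\end{equation*}
A point of $\mathcal{C}_1$ has $\overline\zeta=\overline\sigma=0$ and $\zeta_0=-\sigma_0=\pm1$. At such a point two things hold.
\begin{itemize}
\item $\Phi(\zeta,\sigma)=\theta_0\zeta_0-\omega_0\sigma_0=\pm(\theta_0+\omega_0)$, which is nonzero by~(\ref{nontang}).
\item $X\zeta-Y\sigma=\zeta_0(X_0+Y_0)=0$, by~(\ref{char-init}).
\end{itemize}
The neighborhood $V$ of $(\xi^*,\eta^*)$ is chosen as the image of a set (angular neighborhood)$\times(0,\infty)$. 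The angular neighborhood is taken small enough that, uniformly for $|X|+|Y|\le R$,
\begin{equation*}
  |\Phi(\zeta,\sigma)|\ge\tfrac34|\theta_0+\omega_0|,\qquad |X\zeta-Y\sigma|\le\tfrac14\tau_0|\theta_0+\omega_0| .
\end{equation*}
Both bounds follow by continuity; the second uses that $X\zeta-Y\sigma$ is Lipschitz in $(\zeta,\sigma)$ with constant $\le R$. This is exactly where the dependence of $V$ on $\tau_0$ and $R$ enters. For $\tau\ge\tau_0$ the radial derivative of the phase, $2^j(X\zeta-Y\sigma+\tau\Phi)$, then has modulus at least $c\,2^j\tau$ with $c=|\theta_0+\omega_0|/2$, and it does not depend on $r$.

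\textbf{Step 2: radial integration by parts.} Integrate by parts $K$ times in $r$. No boundary terms arise, since $\tilde\chi\in C_0^\infty({\mathbb R}_+)$ confines $r$ to a compact subset of $(0,\infty)$. The $r$-derivatives of $\varkappa(r\zeta,r\sigma)$ and of $r^{N-1}$ are bounded, and by~(\ref{aj-r-est}) those of $a^\beta_j(r\zeta,r\sigma)$ are $\le C_k2^{j(1+|\beta|)}$. Combining this with the trivial bound ($K=0$) gives
\begin{equation*}
  |I^\beta_{\varkappa,j}(\tau)|\le C_{K}\,2^{j(1+|\beta|)}\min\bigl(1,(2^j\tau)^{-K}\bigr).
\end{equation*}

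\textbf{Step 3: matching~(\ref{Ikj-est}).} Put $m=N+|\beta|$ and treat the two signs of $j$ separately.
\begin{itemize}
\item For $j<0$, take $K\ge m$ and use $\min(1,a^{-K})\le a^{-m}$. The bound becomes $2^{j(1+|\beta|-m)}\tau^{-m}=2^{(N-1)|j|}\tau^{-m}$, and this equals $2^{-Nj-|j|}\tau^{-m}$ when $j<0$.
\item For $j\ge0$, take $K=m+2N+2$ and use $\tau^{-K}\le\tau_0^{-(K-m)}\tau^{-m}$. This gives $2^{j(1+|\beta|-K)}\tau^{-m}\le C2^{-(N+1)j}\tau^{-m}$, since $1+|\beta|-K\le-(N+1)$.
\end{itemize}
The main point is the choice of $V$ in Step 1: the phase must be non-stationary in the radial direction uniformly for all $\tau\ge\tau_0$, despite the bounded shift $X\zeta-Y\sigma$. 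Integrating in any other direction would be useless, because of the $2^{kj}$ growth in Lemma~3.
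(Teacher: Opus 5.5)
Your proof is correct and follows essentially the same route as the paper: radial integration by parts with $L=(x\zeta-y\sigma)^{-1}\partial_r$, using estimate~(\ref{aj-r-est}) for $j$-uniform radial derivative bounds, on a neighborhood where $|x\zeta-y\sigma|\gtrsim\tau$ uniformly for $\tau\geqslant\tau_0$ and $|X|+|Y|\leqslant R$. The only differences are cosmetic: you use a conic neighborhood (scale-invariant, since $X\zeta-Y\sigma$ vanishes along all of $\mathcal{C}_1$) where the paper uses a small neighborhood of the point, and you take more integrations by parts for $j\geqslant 0$ than the paper's choice $k=N+1+|\beta|+{\rm sgn}\,j$.
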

\begin{proof}
According to~(\ref{xy-tending}), we have
\begin{equation}
  x\xi - y\eta = X \xi - Y\eta + \tau (\theta\xi - \omega\eta).
  \label{xxiyeta}
\end{equation}
It follows from condition~(\ref{char-init}) that
\begin{equation*}
  X \xi - Y\eta
  = X_0 \xi_0 - Y_0 \eta_0 + \overline X \,\overline\xi - \overline Y \overline\eta
  = X_0 (\xi_0 + \eta_0) + \overline X \,\overline\xi - \overline Y \overline\eta.
\end{equation*}
Be definition of $\mathcal{C}_1$, we have
\begin{gather}
  \xi^*_0 = -\eta^*_0 \ne 0,  \label{xiet-star1}
  \\
  \overline{\xi^*} = 0 = \overline{\eta^*}. \label{xiet-star2}
\end{gather}
Hence the preceding expression vanishes at the point $(\xi,\eta) = (\xi^*, \eta^*)$,
and does not exceed $4 R (|\xi-\xi^*| + |\eta-\eta^*|)$ in absolute value at arbitrary point.
Therefore, for arbitrary $\varepsilon>0$, the estimate
\begin{equation}
  |X \xi - Y\eta| \leqslant \tau_0 \varepsilon 
  \label{XYxieta}
\end{equation}
holds in a sufficiently small neighborhood of the point $(\xi^*, \eta^*)$.

Now turn to the term $\tau (\theta\xi - \omega\eta)$ in~(\ref{xxiyeta}).
In view of equality~(\ref{xiet-star1}), we have
\begin{equation*}
  \theta\xi - \omega\eta
  = (\theta_0 + \omega_0) \xi_0^* + \theta_0 (\xi_0 - \xi_0^*) - \omega_0(\eta_0-\eta_0^*) + \overline\theta\,\overline\xi - \overline\omega\,\overline\eta.
\end{equation*}
According to~(\ref{xiet-star2}), the sum of the last three terms here
vanishes at the point $(\xi,\eta) = (\xi^*, \eta^*)$,
and thus does not exceed $2\varepsilon$ in absolute value in a sufficiently small neighborhood of this point.
Now letting
\begin{equation*}
  \varepsilon = |(\theta_0 + \omega_0) \xi^*_0|/4,
\end{equation*}
($\varepsilon > 0$ in view of conditions~(\ref{nontang}) and~(\ref{xiet-star1})),
we obtain the estimate $|\theta\xi - \omega\eta| \geqslant 2\varepsilon$.
Combined with estimate~(\ref{XYxieta}) and condition $\tau\geqslant\tau_0$, this implies that 
\begin{equation}
  |x\xi - y\eta| \geqslant 
  \tau \varepsilon, \quad (\xi,\eta)\in V,
  \label{Psi-ne-0}
\end{equation}
where $V$ is a sufficiently small neighborhood of the point $(\xi^*, \eta^*)$.

Introduce the differential operator
\begin{equation*}
  L = (x\zeta - y\sigma)^{-1} \partial_r,
\end{equation*}
which is well-defined on the part of the manifold $\Sigma\times{\mathbb R}_+$, in which the denominator of the expression is nonzero.
The counterpart of the operator $L$ induced on $\mathcal{C}$ by diffeomorphism~(\ref{diffeo})
will be denoted by the same symbol. 
By~(\ref{Psi-ne-0}), for $(\xi,\eta) = (r\zeta,r\sigma) \in V$, we have
\begin{equation}
  |x\zeta - y\sigma| = r^{-1} |x\xi - y\eta| \geqslant r^{-1} \varepsilon\tau.
  \label{denom-ne-0}
\end{equation}
This quantity is positive, 
so the operator $L$ is well-defined in $V$.
Let $L_T$ denote the differential operator formally adjoint to $L$
with respect to the volume form~(\ref{dS-coord}).
We have
\begin{equation}
  L_T^k = \frac{(r^{1-N} \partial_r (r^{N-1} \cdot))^k}{(y\sigma - x\zeta)^k}, \quad k\geqslant 1.
  \label{L-adj}
\end{equation}

Observe that
\begin{equation*}
  e^{i 2^j (x\xi - y\eta)} = (i 2^j)^{-k} L^k e^{i 2^j (x\xi - y\eta)},
\end{equation*}
whence
\begin{equation*}
  |I_{\varkappa,j}(\tau)| \leqslant 2^{-k j} \int_\mathcal{C} \left|\left(L_T^k (\varkappa a^\beta_j)\right)(\xi, \eta)\right| dS_{\xi,\eta}.
\end{equation*}
The integrand is nonzero at a point $(\xi,\eta) = (r\zeta,r\sigma)$ only if $r\in{\rm supp}\tilde\chi \subset {\mathbb R}_+$,
which implies that $r$ must be bounded and separated from zero.
Then by formula~(\ref{L-adj}) and estimate~(\ref{denom-ne-0}), we have
\begin{equation*}
  \left|\left(L_T^k (\varkappa a^\beta_j)\right)(r\zeta, r\sigma)\right|
  \leqslant C_{\varkappa,k}\, \tau^{-k} \max_{0\leqslant \alpha\leqslant k} |\partial_r^\alpha (a^\beta_j(r\zeta, r\sigma))|.   
\end{equation*}
According to~(\ref{aj-r-est}), the last expression is bounded by $C_{\varkappa,k} \tau^{-k} 2^{(1+|\beta|) j}$, whence
\begin{equation}
  |I_{\varkappa,j}(\tau)| \leqslant C_{\varkappa,k}\, \tau^{-k} 2^{(1+|\beta|-k) j}.
  \label{Ikj-b-mu}
\end{equation}
Now taking $k = N + 1 + |\beta| + {\rm sgn} j$, we arrive at estimate~(\ref{Ikj-est}).
\end{proof}

It is convenient to write the integral $I^\beta_{\varkappa,j}(\tau)$ 
as follows 
\begin{equation*}
  I^\beta_{\varkappa,j}(\tau) = \int_\mathcal{C} (\varkappa b^\beta_j)(\xi, \eta)
  e^{i \tau 2^j \Phi(\xi,\eta)} dS_{\xi,\eta},
\end{equation*}
where
\begin{equation}
  b^\beta_j(\xi,\eta) = a_j^\beta(\xi,\eta) e^{i 2^j (X \xi - Y \eta)}.
  \label{bj-def}
\end{equation}

The estimate of the function $b^\beta_j$ analogous to~(\ref{aj-est}) will be required.
To derive it, observe that for the function $e^{i 2^j (X \xi - Y \eta)}$ of variables $(\xi,\eta)$,
the functional $|\cdot|_{\xi,\eta,k}$ is bounded by $C_k (1 + 2^{k j})$.
Then, in view of~(\ref{prod-est}), the following inequality is valid
\begin{equation*}
  |b^\beta_j|_{\xi,\eta,k} \leqslant C_k (1 + 2^{k j}) |a^\beta_j|_{\xi,\eta,k}.
\end{equation*}
Now applying estimate~(\ref{aj-est}) gives
\begin{equation}
  |b^\beta_j|_{\xi,\eta,k} 
  \leqslant \frac{C_{k, M, \mathcal{K}} 2^{(1+|\beta|)j}}{(1 + 2^j)^M},
  \label{bj-est}
\end{equation}
as soon as $(\xi,\eta)$ belongs to a compact set $\mathcal{K} \subset \mathcal{C}\setminus\mathcal{C}_1$.

\begin{lemma}\label{lemma-cover2}
  Under the assumptions of Theorem~\ref{thm},
  every point $(\xi^*,\eta^*)\in\mathcal{C}_0$ has a neighborhood $V\subset\mathcal{C}$
  such that estimate~(\ref{Ikj-est}) is valid for an arbitrary function $\varkappa\in C_0^\infty(V)$.
\end{lemma}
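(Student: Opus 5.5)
Near a point $(\xi^*,\eta^*)\in\mathcal{C}_0$ the amplitude $b^\beta_j$ is smooth off $\mathcal{C}_0$ but only Lipschitz across it, because of the factor $|\xi_0+\eta_0|$. The plan is to straighten $\mathcal{C}_0$ and split the factor $|\Upsilon_0|$ into its two one-sided smooth pieces.

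\emph{Coordinates.} By (\ref{Ups0-nondeg}) and (\ref{ddPhi-ne0}), the restrictions $\Upsilon_0|_{\mathcal{C}}$ and $\Phi|_{\mathcal{C}}$ have linearly independent differentials at $(\xi^*,\eta^*)$. Hence there are local coordinates $(w_1,w_2,w')$ on $\mathcal{C}$ near this point with
\[
w_1=\Upsilon_0|_{\mathcal{C}},\qquad w_2=\Phi|_{\mathcal{C}}.
\]
Choose $V$ inside this chart, with compact closure in $\mathcal{C}\setminus\mathcal{C}_1$ and separated from $\mathcal{C}_{\rm st}$; this is possible since $\mathcal{C}_{\rm st}\cap\mathcal{C}_0=\emptyset$.

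\emph{Splitting the amplitude.} Write $|\xi_0+\eta_0|=w_1\,{\rm sgn}\,w_1$. Then
\[
\varkappa b^\beta_j = \varkappa\, w_1 c_j\,(\mathbf 1_{w_1>0}-\mathbf 1_{w_1<0}),
\]
where $c_j$ is $b^\beta_j$ with the factor $|\xi_0+\eta_0|$ removed. The function $c_j$ is smooth: it is $2\pi\,2^{(1+|\beta|)j}\chi P^\beta \tilde v_0(\ldots)e^{i2^j(X\xi-Y\eta)}$. Its derivatives obey the bound (\ref{bj-est}) with $\mathcal{K}=\overline V$, by the same argument as in the lemma giving (\ref{aj-full-est}) with the modulus dropped. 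Including the density of $dS_{\xi,\eta}$ in these coordinates, each half reduces to
\[
J_\pm=\int_{\pm w_1>0}\psi_j(w)\,e^{i\tau2^j w_2}\,dw,
\]
where $\psi_j$ is smooth and compactly supported in $w_2,w'$.

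\emph{Integration by parts.} The phase depends only on $w_2$, and the boundary $\{w_1=0\}$ is transversal to $\partial_{w_2}$. Integrating by parts $k$ times in $w_2$ therefore produces no boundary terms:
\[
|J_\pm|\le (\tau2^j)^{-k}\int|\partial_{w_2}^k\psi_j|\,dw .
\]
By (\ref{bj-est}), $\sup|\partial^k\psi_j|\le C_{k,M}2^{(1+|\beta|)j}(1+2^j)^{-M}$. This gives
\[
|I^\beta_{\varkappa,j}(\tau)|\le C\,\tau^{-k}2^{-kj}2^{(1+|\beta|)j}(1+2^j)^{-M}.
\]

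\emph{Choosing $k$ and $M$.} Take $k=N+|\beta|$, which gives $\tau^{-N-|\beta|}$ (using $\tau\ge\tau_0$ to raise $k$ further if needed). For $j\ge0$, take $M$ large, e.g. $M=2N+2|\beta|+2$, to obtain $2^{-Nj-j}$. For $j<0$ the factor $2^{-kj}$ grows like $2^{(N+|\beta|)|j|}$ and must be balanced by $2^{(1+|\beta|)j}$. The product is $2^{(1+|\beta|-N-|\beta|)j}=2^{-Nj+j}=2^{-Nj-|j|}$, which is exactly the required rate. So $k=N+|\beta|$ for $j<0$ and a large $M$ for $j>0$ together give (\ref{Ikj-est}).

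\emph{Main obstacle.} The delicate step is the uniformity for $j\to+\infty$. The $w_2$-derivatives of $c_j$ bring powers $2^{j}$ from both $\tilde v_0(2^j\cdot)$ and the phase $e^{i2^j(X\xi-Y\eta)}$. These must be absorbed by the Schwartz decay, which is valid only because $\overline V$ avoids $\mathcal{C}_1$; this is precisely why (\ref{bj-est}) holds there. One must also check that the one-sided decomposition introduces no boundary terms, which relies on the coordinate $w_2$ being tangent to $\{w_1=0\}$. That in turn is exactly the independence statement (\ref{ddPhi-ne0}).
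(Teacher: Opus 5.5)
Your proposal is correct and is essentially the paper's argument. The straightening coordinates $w_1=\Upsilon_0|_{\mathcal C}$, $w_2=\Phi|_{\mathcal C}$ realize the paper's vector field $Z$ concretely as $\partial_{w_2}$, which is tangent to $\mathcal{C}_0$ with $Z\Phi=1$. Your sign split is the paper's $V_\pm$ decomposition, and using a fixed $k=N+|\beta|$ with large $M$ instead of the paper's $M=0$ with $k=N+1+|\beta|+{\rm sgn}\,j$ makes no difference.

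The only slip is wording: $\partial_{w_2}$ is tangent to $\{w_1=0\}$, not transversal. That tangency is exactly why integrating in $w_2$ at fixed $w_1$ produces no boundary terms, as you correctly say at the end.
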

\begin{proof}
Note that condition~(\ref{nontang}) will not be used in the proof.
Relation~(\ref{ddPhi-ne0}) implies that there exists a vector $z\in T_{(\xi^*,\eta^*)} \mathcal{C}_0$,
at which the value of the differential $d\Phi(\xi^*,\eta^*)$ is nonzero.
It is possible to construct a vector field $Z$ in some neighborhood $V\subset\mathcal{C}$ of the point $(\xi^*,\eta^*)$
that is equal to $z$ at $(\xi^*,\eta^*)$ and satisfies the following condition
\begin{equation}
  Z(\xi,\eta) \in T_{(\xi,\eta)} \mathcal{C}_0, \quad (\xi,\eta) \in V\cap\mathcal{C}_0.
  \label{Z-tang}
\end{equation}
Next we pass (if necessary) to a smaller neighborhood $V$, in which
the condition $\langle{}d\Phi, Z\rangle\ne 0$ is satisfied. 
This can be written as follows
\begin{equation}
  Z\Phi\big|_V \ne 0.
  \label{ZPhi}
\end{equation}
Since $(\xi^*,\eta^*)\notin\mathcal{C}_1$, we may assume that
\begin{equation}
  \overline{V} \cap \mathcal{C}_1 = \emptyset.
  \label{V-star-sep}
\end{equation}
According to~(\ref{C0C}), we have $\Upsilon_0(\xi^*,\eta^*) = 0$,
and, besides,
\begin{equation}
  V_+\cup V_- = V\setminus \mathcal{C}_0,
  \quad
  V_\pm := \{ (\xi,\eta) \in V \,|\, \pm \Upsilon_0(\xi,\eta) > 0\}.
  \label{VpmV}
\end{equation}
In view of~(\ref{Ups0-nondeg}), the sets $V_\pm$ are nonempty.
We may assume that $V$ is chosen in such a way that $V_\pm$
are domains with smooth boundaries on the manifold ${\mathbb C}$.
Now we show that
\begin{equation}
  V \cap \partial V_\pm \subset V \cap \mathcal{C}_0
  \label{V-inclus}
\end{equation}
(in reality, this is valid with the equal sign as well).
Let a point $(\xi, \eta)$ belong to $V \cap \partial V_\pm$.
This implies that $\pm\Upsilon_0(\xi,\eta) \geqslant 0$.
On the other hand, since the set $V_\pm$ is open, it does not contain its boundary point $(\xi,\eta)$,
and so
\begin{equation*}
  (\xi, \eta) \in V\setminus V_\pm \subset V_\mp \cup \mathcal{C}_0,
\end{equation*}
(the last inclusion follows from~(\ref{VpmV})).
Together with~(\ref{C0C}) and the definition of the set $V_\mp$ this leads to inequality $\mp\Upsilon_0(\xi,\eta) \geqslant 0$.
Thus $\Upsilon_0(\xi,\eta) = 0$, and so $(\xi,\eta) \in \mathcal{C}_0$.

In view of~(\ref{ZPhi}), the differential operator
\begin{equation*}
  L = (Z\Phi)^{-1} Z
\end{equation*}
is well-defined in the neighborhood $V$,
and the following relation is valid
\begin{equation*}
  e^{i \tau 2^j \Phi} = 
  (i \tau 2^j)^{-k} L^k e^{i \tau 2^j \Phi}.
\end{equation*}
Therefore, for any function $\varkappa\in C_0^\infty(V)$, we have
\begin{equation}
  I^\beta_{\varkappa, j}(\tau)
  =  (i \tau 2^j)^{-k}
  \int_V (\varkappa b^\beta_j L^k e^{i \tau 2^j \Phi})(\xi,\eta) dS_{\xi,\eta}.
  \label{I-L}
\end{equation}
By equality~(\ref{VpmV}), the integral here equals the sum of the integrals over $V_+$, $V_-$
(integrating over the set $V\cap \mathcal{C}_0$ gives zero since the latter has zero measure).
Next, the functions $b^\beta_j$ are $C^\infty$-smooth in $V_\pm$,
and so, in each of the two integrals over these sets, we may perform integrating by parts.
This yields the following relation
\begin{equation}
  I^\beta_{\varkappa, j}(\tau) 
  =  \sum_\pm(i \tau 2^j)^{-k}
  \int_{V_\pm} \left( e^{i \tau 2^j \Phi} L_T^k (\varkappa b^\beta_j) \right)(\xi,\eta) \,dS_{\xi,\eta},
  \label{I-Vpm}
\end{equation}
where $L_T$ is a differential operator formally adjoint to $L$ with respect to the volume form~(\ref{dS-coord}).
Note that the corresponding integral over the boundary $\partial V_\pm$ is zero.
Indeed, the integrand vanishes on the set $\partial V_\pm \setminus V$ due to the factor $\varkappa$,
and the intersection $V \cap \partial V_\pm$, by~(\ref{V-inclus}), is contained in the submanifold $V\cap \mathcal{C}_0$,
to which the field $Z$ is tangential by condition~(\ref{Z-tang}).

For $(\xi,\eta)\in V_\pm$, we have
\begin{equation*}
  \left|\left(L_T^k (\varkappa b^\beta_j)\right)(\xi,\eta)\right|
  \leqslant C_{\varkappa,k} |b^\beta_j|_{\xi,\eta,k}
  \leqslant C_{\varkappa,k} 2^{(1+|\beta|)j}.
\end{equation*}
In the second inequality, we took into account condition~(\ref{V-star-sep})
invoking also estimate~(\ref{bj-est}) with $M = 0$.
Then, according to~(\ref{I-Vpm}), we arrive at the estimate of the form~(\ref{Ikj-b-mu}).
It remains to make the same choice of $k$ as in the end of the proof of Lemma~\ref{lemma-cover}.
\end{proof}

\begin{lemma}\label{lemma-cover3}
  Under the assumptions of Theorem~\ref{thm},
  every point $(\xi^*,\eta^*)\in\mathcal{C}\setminus(\mathcal{C}_0\cup\mathcal{C}_1\cup\mathcal{C}_{\rm st})$
  has a neighborhood $V\subset\mathcal{C}$
  such that estimate~(\ref{Ikj-est}) is valid for any function $\varkappa\in C_0^\infty(V)$.
\end{lemma}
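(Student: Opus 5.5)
We will follow the scheme of the proof of Lemma~\ref{lemma-cover2}, but in a simpler setting: away from $\mathcal{C}_0\cup\mathcal{C}_1$ the amplitude $b^\beta_j$ is smooth, so no splitting into $V_\pm$ and no boundary terms are needed. Since $(\xi^*,\eta^*)\notin\mathcal{C}_{\rm st}$, relation~(\ref{dPhi-ne-0}) gives a vector $z\in T_{(\xi^*,\eta^*)}\mathcal{C}$ with $\langle d\Phi(\xi^*,\eta^*), z\rangle\ne 0$. We extend $z$ to a smooth vector field $Z$ on a neighborhood $V\subset\mathcal{C}$ of $(\xi^*,\eta^*)$. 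Shrinking $V$, we arrange three conditions:
\begin{itemize}
\item $Z\Phi\ne 0$ on $\overline V$;
\item $\overline V$ is compact;
\item $\overline V\cap(\mathcal{C}_0\cup\mathcal{C}_1)=\emptyset$.
\end{itemize}
The last one is possible because $\mathcal{C}_0\cup\mathcal{C}_1$ is exactly the zero set of $\Upsilon_0$ on $\mathcal{C}$, which is closed, so $\Upsilon_0\ne 0$ near $(\xi^*,\eta^*)$. Then $|\xi_0+\eta_0|$ is bounded away from zero on $\overline V$. Consequently $a^\beta_j$, and hence $b^\beta_j$, is $C^\infty$ in $V$, and estimate~(\ref{bj-est}) holds on $\mathcal{K}=\overline V$.

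Next we set $L=(Z\Phi)^{-1}Z$, so that $e^{i\tau 2^j\Phi}=(i\tau 2^j)^{-k}L^k e^{i\tau 2^j\Phi}$. For $\varkappa\in C_0^\infty(V)$ we integrate by parts $k$ times over $V$, with no boundary terms since $\varkappa$ has compact support in $V$:
\begin{equation*}
  I^\beta_{\varkappa,j}(\tau)=(i\tau 2^j)^{-k}\int_V e^{i\tau 2^j\Phi}\,L_T^k(\varkappa b^\beta_j)\,dS_{\xi,\eta},
\end{equation*}
where $L_T$ is the formal adjoint of $L$ with respect to~(\ref{dS-coord}). The coefficients of $L_T^k$ are smooth on $\overline V$ and depend only on $\theta,\omega,\varkappa$; in particular they do not depend on $\tau$ or $j$, because $\Phi$ involves only $\theta,\omega$. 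Hence
\begin{equation*}
  |L_T^k(\varkappa b^\beta_j)|\leqslant C_{\varkappa,k}|b^\beta_j|_{\xi,\eta,k}\leqslant C_{\varkappa,k,M}\,2^{(1+|\beta|)j}(1+2^j)^{-M}.
\end{equation*}
Here the $\tau$-dependence of $b^\beta_j$ enters only through $X,Y$, which are bounded by $R$, and this was already absorbed in~(\ref{bj-est}).

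It remains to choose the parameters. We get
\begin{equation*}
  |I^\beta_{\varkappa,j}(\tau)|\leqslant C\,\tau^{-k}\,2^{(1+|\beta|-k)j}(1+2^j)^{-M}.
\end{equation*}
Take $k=N+1+|\beta|+{\rm sgn}\,j$, as in Lemma~\ref{lemma-cover}. For $j\geqslant 0$ this gives the bound $2^{-(N+1)j}=2^{-Nj-|j|}$, already with $M=0$. For $j<0$ it gives $2^{-(N-1)j}=2^{-Nj-|j|}$, again with $M=0$. In both cases $\tau^{-k}\leqslant C_{\tau_0}\tau^{-N-|\beta|}$ for $\tau\geqslant\tau_0$, since $k\geqslant N+|\beta|$. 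This yields~(\ref{Ikj-est}).

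The only point needing care is ensuring that $V$ avoids $\mathcal{C}_0\cup\mathcal{C}_1$ and $\mathcal{C}_{\rm st}$ simultaneously while $Z\Phi$ stays nonzero. This is routine given openness of all three conditions, so there is no real obstacle. Note also that, unlike in Lemma~\ref{lemma-cover}, the neighborhood here does not depend on $\tau_0$ or $R$.
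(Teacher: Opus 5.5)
Your proposal is correct and is essentially the paper's own argument. You take a vector field $Z$ with $Z\Phi\ne 0$ near the point, using (\ref{dPhi-ne-0}), and shrink the neighborhood so that its closure avoids $\mathcal{C}_0\cup\mathcal{C}_1$, which makes $b^\beta_j$ smooth. You then integrate by parts with $L=(Z\Phi)^{-1}Z$ without splitting into $V_\pm$, apply (\ref{bj-est}) with $M=0$, and choose $k=N+1+|\beta|+{\rm sgn}\, j$, exactly as the paper does.
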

\begin{proof}
By relation~(\ref{dPhi-ne-0}), we may choose a vector field $Z$ in some neighborhood $V\subset\mathcal{C}$ of the point $(\xi^*,\eta^*)$
in such a way that condition of the form~(\ref{ZPhi}) is satisfied.
Thus we obtain the representation of the integral $I^\beta_{\varkappa,j}$ analogous to~(\ref{I-L}).
Since $(\xi^*,\eta^*)$ lies outside of the set $\mathcal{C}_0\cup\mathcal{C}_1$, we may assume that $\overline V$ is also separated from this set.
Then the factor $b^\beta_j$ in the integrand in~(\ref{I-L}) is smooth on the set of integration,
which allows us to perform integration by parts.
Further deduction of estimate~(\ref{Ikj-est}) is the same
as in the proof of Lemma~\ref{lemma-cover2},
but without 
decomposition of the neighborhood $V$ to $V_\pm$.
\end{proof}

In view of inclusion~(\ref{stat-ray}),
under the assumptions of Lemmas~\ref{lemma-cover}, \ref{lemma-cover2}, \ref{lemma-cover3},
the point $(\xi^*,\eta^*)$ lies outside of the set $\mathcal{C}_{\rm st}$.
So the neighborhoods $V$ constructed in these lemmas may also be assumed to lie outside of this set.

Now turn to investigation of the integral $I^\beta_{\varkappa, j}$ in the case
when the support of the function $\varkappa$ intersects with $\mathcal{C}_{\rm st}$.

\begin{lemma}   \label{lemma-stat}
  Under the assumptions of Theorem~\ref{thm},
  every point $(\xi^*,\eta^*)\in\mathcal{C}_{\rm st}$
  has a neighborhood $V\subset\mathcal{C}$
  such that for any $\varkappa\in C_0^\infty(V)$, $\tau>0$, $j\in{\mathbb Z}$, the following relations hold
\begin{equation}
  I^\beta_{\varkappa,j}(\tau) =
  \frac{(2\pi)^{N/2} e^{\pm i\pi (n-d)/4}}{2 (2^j \tau)^{N/2}}
  \int_0^\infty r^{N/2-1} (\varkappa b^\beta_j)(\pm r\theta, \pm r\omega) dr + K^\beta_{\varkappa,j}(\tau),
  \label{I-stat}
\end{equation}
\begin{equation}
  |K^\beta_{\varkappa,j}(\tau)| \leqslant \frac{C_{\varkappa, M} 2^{(1+|\beta| - (N+1)/2)j}}{\tau^{(N+1)/2} \, (1 + 2^j)^M}, \quad M \geqslant 0.
  \label{R-est}
\end{equation}
\end{lemma}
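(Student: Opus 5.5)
We will apply the stationary phase method on the sphere factor $\Sigma$, treating the radial variable $r$ as a parameter. Since $\mathcal{C}_{\rm st}$ lies in $\mathcal{C}\setminus(\mathcal{C}_0\cup\mathcal{C}_1)$ by~(\ref{stat-ray}), we first choose $V$ as a small conic-type neighborhood of $(\xi^*,\eta^*)=(\pm r^*\theta,\pm r^*\omega)$. In coordinates~(\ref{diffeo}) it has the form $\{(r\zeta,r\sigma)\,|\,(\zeta,\sigma)\in U,\ r\in J\}$, where $U$ is a small neighborhood of $\pm(\theta,\omega)$ in $\Sigma$ and $J$ is an interval around $r^*$. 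We also require $\overline V\cap(\mathcal{C}_0\cup\mathcal{C}_1)=\emptyset$. Then $b^\beta_j$ is $C^\infty$ on $\overline V$, and by~(\ref{bj-est}) all its derivatives up to order $k$ are bounded by $C_{k,M}2^{(1+|\beta|)j}(1+2^j)^{-M}$. By~(\ref{dS-coord}),
\begin{equation*}
  I^\beta_{\varkappa,j}(\tau)=\frac12\int_J r^{N-1}\int_{U} (\varkappa b^\beta_j)(r\zeta,r\sigma)\, e^{i\tau 2^j r\,\phi(\zeta,\sigma)}\,dS_\zeta dS_\sigma\, dr,
  \quad \phi(\zeta,\sigma)=\theta\zeta-\omega\sigma .
\end{equation*}

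Next we analyze the phase on $\Sigma$. On $S^d\times S^n$, the function $\phi$ has critical points only at $\zeta=\pm\theta$, $\sigma=\pm\omega$ (the signs taken independently). Only $\pm(\theta,\omega)$ lies in $U$, and there $\phi=\pm 2$ (wait: $\theta\theta-\omega\omega=0$), so in fact $\phi(\pm(\theta,\omega))=0$. The Hessian is nondegenerate. Near $\zeta=\theta$, the function $\theta\zeta$ has a maximum, giving a Hessian $-I_d$ in geodesic coordinates. Near $\sigma=\omega$, the function $-\omega\sigma$ has a minimum, giving $+I_n$. So the signature is $n-d$ for the $+$ sign and $d-n$ for the $-$ sign, and $|\det|=1$. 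The leading stationary phase term in dimension $N$ with large parameter $\lambda=\tau2^j r$ is
\begin{equation*}
  (2\pi/\lambda)^{N/2}e^{\pm i\pi(n-d)/4}\,(\varkappa b^\beta_j)(\pm r\theta,\pm r\omega).
\end{equation*}
Multiplying by $\frac12 r^{N-1}$ and integrating over $r$ gives exactly the main term of~(\ref{I-stat}). Since $\varkappa$ is supported in $V$, the integral over $J$ may be written as one over $(0,\infty)$.

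Finally, we need the remainder bound~(\ref{R-est}), uniformly in $j\in\mathbb Z$ and $\tau>0$. We use the standard stationary phase remainder estimate, in H\"ormander's form: for $\lambda>0$,
\begin{equation*}
  \Bigl|\int_U g\,e^{i\lambda\phi}-\text{(leading term)}\Bigr|\le C\lambda^{-N/2-1}\sum_{|\alpha|\le N+3}\sup|\partial^\alpha g| .
\end{equation*}
This bound, applied with $g=(\varkappa b_j^\beta)(r\cdot)$, is too strong for small $\lambda$. Instead we combine it with the trivial estimate in which each of the two terms is bounded by $C\sup|g|\,(1+\lambda^{-N/2})$. Interpolating the two (using $\min(\lambda^{-N/2-1},\lambda^{-N/2})\le\lambda^{-(N+1)/2}$ for the regimes $\lambda\ge1$ and $\lambda\le1$) gives a remainder of size $C\lambda^{-(N+1)/2}\|g\|_{C^{N+3}}$ for all $\lambda>0$. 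Since $r\in J$ is bounded away from $0$ and $\infty$, the derivatives of $g$ are controlled by~(\ref{bj-est}) with $k=N+3$. This yields $C_{\varkappa,M}2^{(1+|\beta|)j}(1+2^j)^{-M}(2^j\tau)^{-(N+1)/2}$, which is~(\ref{R-est}).

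The main obstacle is this uniformity. The derivatives of $b^\beta_j$ grow like $2^{kj}$ because of $e^{i2^j(X\xi-Y\eta)}$ and the rescaled $\tilde v_0$, but~(\ref{bj-est}) absorbs that growth into $(1+2^j)^{-M}$. For $j\to-\infty$ the growth is harmless, though small $\lambda$ must be handled by the interpolation above. We also need the stationary phase constants to be uniform in $r\in J$. To guarantee this, we choose $U$ small enough that $\phi$ has a single nondegenerate critical point there, which lets us apply the Morse lemma once, independently of $r$ and $j$.
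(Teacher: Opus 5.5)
Your proposal is correct and follows essentially the same route as the paper. You pass to polar coordinates via~(\ref{diffeo}), apply stationary phase on $\Sigma$ in $(\zeta,\sigma)$ with $r$ as a parameter and $\lambda = 2^j r\tau$ (the signature gives $e^{\pm i\pi(n-d)/4}$), bound the remainder through estimate~(\ref{bj-est}), and then integrate in $r$. The differences are minor: you truncate $V$ in $r$ where the paper takes the full cone over $\Sigma_{\rm st}$ (harmless, since $\varkappa$ is compactly supported), and you work out the small-$\lambda$ regime and the Hessian signature explicitly, where the paper takes them from the cited stationary phase result.
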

\begin{proof}
Under the given assumptions, we have
\begin{equation*}
  (\xi^*,\eta^*) = (\pm r^*\theta, \pm r^*\omega), \quad r^* > 0.
\end{equation*}
In view of condition~(\ref{nontang}), the point $(\pm\theta, \pm\omega)$ (here and further the sign $\pm$ is chosen the same
as in the preceding equality) has a neighborhood $\Sigma_{\rm st}$ on the manifold $\Sigma$ such that
\begin{equation*}
  \zeta_0 + \sigma_0 \ne 0, \quad (\zeta,\sigma)\in\Sigma_{\rm st}.
\end{equation*}
We choose the desired neighborhood of the point $(\xi^*, \eta^*)$ as follows
\begin{equation*}
  V = \{ (r\zeta,r\sigma)\,|\, (\zeta,\sigma)\in\Sigma_{\rm st}, \,\, r > 0 \}. 
\end{equation*}
The preceding condition ensures the inclusion
\begin{equation}
  V \subset \mathcal{C}\setminus(\mathcal{C}_0\cup \mathcal{C}_1).
  \label{VCC}
\end{equation}

For $\varkappa\in C_0^\infty(V)$, we have
\begin{equation}
  I^\beta_{\varkappa,j}(\tau) =
  \frac{1}{2} \int_0^\infty dr\, r^{N-1} \int_{\Sigma_{\rm st}} (\varkappa b^\beta_j)(r\zeta, r\sigma) e^{i \tau 2^j r \Phi(\zeta,\sigma)} dS_{\zeta,\sigma}.
  \label{I-princ}
\end{equation}
Consider the inner integral over $\Sigma_{\rm st}$.
In view of~(\ref{VCC}), the function $b^\beta_j$ is smooth on the set $V$.
Hence, for fixed $r$, the function
\begin{equation*}
  B_j(\zeta, \sigma, r) = (\varkappa b^\beta_j)(r\zeta, r\sigma)
\end{equation*}
is smooth in $\Sigma_{\rm st}$.
The asymptotics of such an integral can be found by the method of stationary phase,
which was done in~\cite{DMN23}.
All of the stationary points of the phase function $\Phi|_\Sigma$ 
\begin{equation*}
  (\zeta,\sigma) \in \{ (\theta,\omega), (-\theta,-\omega), (-\theta,\omega), (\theta,-\omega) \}
\end{equation*}
are nondegenerate.
We choose the set $\Sigma_{\rm st}$ in such a way that it contained only the point $(\pm\theta,\pm\omega)$ from the set above.
Then, for $\tau>0$, the integral of interest can be represented in the form
\begin{equation}
  \left(\frac{2\pi}{2^j r \tau}\right)^{N/2} 
  e^{\pm i\pi (n-d)/4} B_j(\pm\theta, \pm\omega, r)
  + \widetilde K^\beta_{\varkappa,j}(\tau, r),
  \label{sphere-asymp}
\end{equation}
where the remainder term satisfies the following estimate
\begin{equation}
  |\widetilde K^\beta_{\varkappa,j}(\tau, r)| \leqslant \frac{C \|B_j(\cdot, \cdot, r)\|_{C^{N+2}(\Sigma_{\rm st})}}{(2^j r \tau)^{(N+1)/2}}.
  \label{wti-K}
\end{equation}
By the norm in $C^{N+2}(\Sigma_{\rm st})$, we mean one of the equivalent norms in this space
provided by a finite atlas on the manifold $\Sigma$ and the corresponding partition of unity.
It can be easily seen, however, that the $C^{N+2}$-norm of the function $B_j(\zeta, \sigma, r)$
of variables $(\zeta,\sigma)\in\Sigma_{\rm st}$ is bounded for any $r$ by
\begin{equation*}
  C_\varkappa \left(1 + r^{N+2}\right) \max_{(\xi,\eta)\in r\Sigma_{\rm st}} |b^\beta_j|_{\xi,\eta,N+2}.
\end{equation*}
Invoking estimate~(\ref{bj-est}),
we may deduce from this and~(\ref{wti-K}) the following estimate
\begin{equation*}
  r^{N-1} |\widetilde K^\beta_{\varkappa,j}(\tau, r)|
  \leqslant \frac{C_{\varkappa, M} 2^{(1 + |\beta| - (N+1)/2)j} (1 + r^{N+2}) r^{(N-3)/2}}{\tau^{(N+1)/2} (1 + 2^j)^M}.
\end{equation*}
The factor that depends on $r$ can be absorbed by the constant $C_{\varkappa,M}$,
as the function $\varkappa$ is nonzero only if $r$ is bounded and separated from zero.
Thus the contribution of the function $\widetilde K^\beta_{\varkappa,j}(\tau, r)$ to the integral~(\ref{I-princ})
is bounded by the r.h.s. of~(\ref{R-est}).

It remains to note that the contribution of the first term of the sum~(\ref{sphere-asymp})
to the integral~(\ref{I-princ}) equals the first term of the r.h.s. of~(\ref{I-stat}).
\end{proof}

\section{Proof of Theorem~\ref{thm}}   \label{sec-proof}
Every point $(\xi^*,\eta^*)$ of the manifold $\mathcal{C}$
has a neighborhood $V\subset\mathcal{C}$ for which 
the assertion of one of the Lemmas~\ref{lemma-cover}-\ref{lemma-stat} is valid.
Suppose that the chosen neighborhoods are precompact.
They form an open cover of the manifold $\mathcal{C}$.
We choose its locally finite refinement (which does exist, as $\mathcal{C}$ is a paracompact space).
Next we choose a smooth partition of unity on $\mathcal{C}$ subordinate to the resulting cover.
It consists of compactly supported functions, since all of the sets from the cover are precompact.
Let $\{V_k\}$ denote the (finite) collection of sets from the cover
that intersect with the compact $\mathcal{K} := \mathcal{C} \cap {\rm supp}\chi$.
The collection of corresponding functions from the partition of unity will be denoted by $\{\varkappa_k\}$.
By construction we have
\begin{equation}
  \sum_k \varkappa_k\Big|_\mathcal{K} = 1.
  \label{partition}
\end{equation}
Let us split the collection $\{V_k\}$ in $\{V'_\ell\}$ and $\{V''_m\}$,
the former consisting of those sets that are contained in the neighborhoods
constructed with the use of Lemma~\ref{lemma-stat},
and the latter being the remainder.
We split the collection of functions $\{\varkappa_k\}$ in $\{\varkappa'_\ell\}$, $\{\varkappa''_m\}$ in the same way,
and put
\begin{equation*}
  \varkappa' = \sum_{\ell} \varkappa'_\ell, \quad \varkappa'' = \sum_{m} \varkappa''_m.
\end{equation*}
The integrals $I^\beta_{\varkappa',j}(\tau)$, $I^\beta_{\varkappa'',j}(\tau)$ can be represented as follows
\begin{equation*}
  I^\beta_{\varkappa',j}(\tau) = \sum_\ell I^\beta_{\varkappa'_\ell,j}(\tau), \quad
  I^\beta_{\varkappa'',j}(\tau) = \sum_m I^\beta_{\varkappa''_m,j}(\tau).
\end{equation*}
All of the terms of the last sum, and thus the integral $I^\beta_{\varkappa'',j}(\tau)$, satisfy estimate~(\ref{Ikj-est}).
Furthermore, the factor $\tau^{-(N + |\beta|)}$ can be replaced by $\tau^{-(N+1)/2}$, as $\tau \geqslant \tau_0$ and $N\geqslant 2$
(the latter follows from the condition $d,n\geqslant 1$). We arrive at the following estimate
\begin{equation}
  |I^\beta_{\varkappa'', j}(\tau)| 
  \leqslant \frac{C_{\tau_0}}{2^{N j + |j|} \tau^{(N+1)/2}}, \quad \tau\geqslant\tau_0>0.
  \label{Ikk-est}
\end{equation}
Next, equality~(\ref{partition}) implies
\begin{equation}
  (\varkappa' + \varkappa'')|_\mathcal{K} = 1,
  \label{partition-2}
\end{equation}
whence
\begin{equation}
  I^\beta_j(\tau) = I^\beta_{\varkappa',j}(\tau) + I^\beta_{\varkappa'',j}(\tau).
  \label{III}
\end{equation}
In view of the remark, made after the proof of Lemma~\ref{lemma-cover3},
we may assume that neighborhoods $\{V''_m\}$ do not intersect with $\mathcal{C}_{\rm st}$,
which implies that the function $\varkappa''$ vanishes in this set.
Together with equality~(\ref{partition-2}), this means that
\begin{equation*}
  \sum_{\ell} \varkappa'_\ell(\pm r\theta, \pm r\omega) = \varkappa'(\pm r\theta, \pm r\omega) = 1, \quad r\in {\rm supp}\tilde\chi.
\end{equation*}
Therefore summing equalities~(\ref{I-stat}) for the functions $\varkappa'_\ell$ with respect to $\ell$
yields the following representation
\begin{equation*}
  I^\beta_{\varkappa',j}(\tau) = F^\beta_j(\tau) + K^\beta_{\varkappa',j}(\tau),
\end{equation*}
in which
\begin{equation}
  F^\beta_j(\tau) = \sum_\pm \frac{(2\pi)^{N/2} \epsilon^{\pm 1}}{2 (2^j \tau)^{N/2}}
  \int_0^\infty r^{N/2-1} b^\beta_j(\pm r\theta, \pm r\omega) dr,
  \quad \epsilon := e^{i\pi (n-d)/4},
  \label{Fbj}
\end{equation}
and the term $K^\beta_{\varkappa',j}(\tau)$ satisfies the estimate of the form~(\ref{R-est}).
Now we may write equality~(\ref{III}) in the form
\begin{equation}
  I^\beta_j(\tau) = F^\beta_j(\tau) + K^\beta_{\varkappa',j}(\tau) + I^\beta_{\varkappa'',j}(\tau)
  \label{IJK}
\end{equation}
and estimate the contribution of each term on the r.h.s. to the series~(\ref{sum-CC}).
By estimate~(\ref{R-est}), in which we take $M > |\beta| + (N+1)/2$,
and estimate~(\ref{Ikk-est}), we have
\begin{equation}
  \sum_{j\in{\mathbb Z}} 2^{N j} (|K^\beta_{\varkappa',j}(\tau)| + |I^\beta_{\varkappa'',j}(\tau)|)
  \leqslant \frac{C_{\tau_0}}{\tau^{(N+1)/2}}.
  \label{KI-est}
\end{equation}
Now turn to the term $F^\beta_j(\tau)$.
We have
\begin{equation*}
  \sum_\pm \epsilon^{\pm 1} \int_0^\infty r^{N/2-1} b^\beta_j(\pm r\theta, \pm r\omega) dr
  = \int_{\mathbb R} \epsilon^{\,{\rm sgn}\, r} |r|^{N/2-1} b^\beta_j(r\theta, r\omega) dr.
\end{equation*}
By definition~(\ref{bj-def}), the resulting intergal equals
\begin{multline*}
  \int_{\mathbb R} \epsilon^{\,{\rm sgn}\, r} |r|^{N/2-1} \tilde\chi(|r|) \, a^\beta(2^j r\theta, 2^j r\omega) e^{i 2^j r (X \theta - Y \omega)} dr
  \\= 2^{-N j/2} \int_{\mathbb R} \epsilon^{\,{\rm sgn}\, r} |r|^{N/2-1} \tilde\chi(2^{-j} |r|) \, a^\beta(r\theta, r\omega) e^{i r (X\theta - Y\omega)} dr.
\end{multline*}
Now, by definitions~(\ref{abe}), (\ref{uhat}), we have
\begin{multline*}
  a^\beta(r\theta, r\omega) = (P^\beta a)(r\theta, r\omega) = r^{|\beta|} P^\beta(\theta, \omega) \, a(r\theta, r\omega)
  \\= 2\pi ({\rm sgn}\, r)^{|\beta|} |r|^{|\beta|+1} P^\beta(\theta,\omega) |\theta_0 + \omega_0|\, \widetilde v_0(r(\theta_0 + \omega_0), r \overline\theta, r \overline\omega).
\end{multline*}
Thus equality~(\ref{Fbj}) can now be written as follows
\begin{equation*}
  \frac{2^{N j} F^\beta_j(\tau)}{(2\pi)^{N+2}} =
  \frac{P^\beta(\theta,\omega) |\theta_0 + \omega_0|}{\tau^{N/2}}
  \int_{\mathbb R} e^{i r p} G^\beta_{d,n}(r) \tilde\chi(2^{-j} |r|)
  \widetilde v_0(r(\theta_0 + \omega_0), r \overline\theta, r \overline\omega) dr.
\end{equation*}
The integral in the resulting expression coincides with that in the formula for $F^\beta$ given in
the formulation of Theorem~\ref{thm}, up to the factor $\tilde\chi(2^{-j} |r|)$ in the integrand.
This factor turns to unity after summing with respect to $j$, which follows from equality~(\ref{sum-chi}).
Thus the contribution of the quantities $F^\beta_j(\tau)$ to the series~(\ref{sum-CC}) equals
\begin{equation}
  \sum_{j\in{\mathbb Z}} (2\pi)^{-N-2} 2^{N j} F^\beta_j(\tau) = \tau^{-N/2} F^\beta(\theta, \omega, p),
  \label{sum-J}
\end{equation}
and the series of the absolute values satisfies the following estimate
\begin{equation}
  \sum_{j\in{\mathbb Z}} (2\pi)^{-N-2}\, 2^{N j} |F^\beta_j(\tau)|
  \leqslant C \tau^{-N/2} \int_{\mathbb R} |r|^{N/2+|\beta|} |\widetilde v_0(r(\theta_0 + \omega_0), r \overline\theta, r \overline\omega)| dr.
  \label{J-est}
\end{equation}
In the derivation of the last two relations, we used to the properties~(\ref{sum-chi}) of the function $\tilde\chi$
and fast decay of the function $\widetilde v_0$.

Estimates~(\ref{KI-est}), (\ref{J-est}) imply that the series on the r.h.s. of equivalent relations~(\ref{u-xy}) and (\ref{sum-CC})
are absolutely convergent locally uniformly with respect to $\tau>0$, $X,Y$.
In view of remarks made before the formulation of Theorem~\ref{thm},
this means that these series are absolutely convergent locally uniformly with respect to $(x,y)$
outside of the hyperplane~(\ref{char-plane}).
As was shown in sec.~\ref{sec-little-paley}, this ensures the validity of formula~(\ref{u-xy})
or, equivalently, formula~(\ref{sum-CC}).
The latter, combined with equalities~(\ref{IJK}), (\ref{sum-J}), and estimate~(\ref{KI-est}),
leads to relation~(\ref{thm-asymp}).

\end{document}